\newtheorem*{theoA}{Theorem A}
\newtheorem*{theoB}{Theorem B}
\newtheorem*{theoC}{Theorem C}
\newtheorem*{theoD}{Theorem D}
\newtheorem{theo}{Theorem}[section]
\newtheorem{lem}{Lemma}[section]
\newtheorem{exm}{Example}[section]
\newtheorem{ques}{Question}[section]
\newcommand{\ol}{\overline}
\newcommand{\be}{\begin{equation}}
	\newcommand{\ee}{\end{equation}}
\newcommand{\beas}{\begin{eqnarray*}}
	\newcommand{\eeas}{\end{eqnarray*}}
\newcommand{\bea}{\begin{eqnarray}}
	\newcommand{\eea}{\end{eqnarray}}
\numberwithin{equation}{section}
\begin{document}
	\setcounter{page}{1}
	
\title[existence of entire solutions.....]{On Entire solutions of system of Fermat-type difference and partial differential-difference equations in $\mathbb{C}^n$}
	
	\author[G. Haldar]{Goutam Haldar}
	
	\address{Department of Mathematics, Malda College, Malda, West Bengal 732101, India.}
	\email{goutamiit1986@gmail.com}
	
	
	\subjclass{39A45, 30D35, 32H30, 39A14, 35A20}
	
	\keywords{entire solutions, Fermat-type, differential difference equations, Nevanlinna theory.}
	\date{Received: xxxxxx; Revised: yyyyyy; Accepted: zzzzzz.
		\newline\indent $^{*}$ Corresponding author}
	
\begin{abstract} 
The equation $f^n+g^n=1$, $n\in\mathbb{N}$ can be regarded as the Fermat Diophantine equation over the function field. In this paper we study the characterization of entire solutions of some system of Fermat type functional equations by taking $e^{g_1(z)}$ and $e^{g_2(z)}$ in the right hand side of each equation, where $g_1(z)$ and $g_2(z)$ are polynomials in $\mathbb{C}^n$. Our results extend and generalize some recent results. Moreover, some examples have been exhibited to show that our results are precise to some extent.
\end{abstract} \maketitle
	
\section{Introduction and Results}
It is well known that Nevanlinna theory is an important tool to study value distribution of entire and meromorphic solutions on complex differential equations (see \cite{Hayman & 1964, Laine & 1993}). The study of finding the entire and meromorphic solutions with precise form of different variants Fermat type functional equation is an interesting topic in the field of complex analysis. The Fermat type functional equation has been originated from  the Fermat type equation $x^n+y^m=1$, where $m,n\in \mathbb{N}$, which were studied by Wiles and Taylor \cite{Tailor & Wiles & 1995, Wiles & Ann. Math. 1995}, around 1995, and they pointed out that it does not admit nontrivial solution in rational numbers for $m=n\geq3$, and does exist nontrivial solution in rational numbers for $m=n=2$. Initially, Fermat-type functional equations were investigated by Montel \cite{Montel & Paris & 1927}, Gross \cite{Gross & Bull. Amer. & 1966, Gross & Amer. Math. & 1966} and Iyer \cite{Iyer & J. Indian. Math. Soc. & 1939}, independently.\vspace{1.2mm}
\par Recently, the difference analogues of Nevanlinna theory, specially the development of difference analogous lemma of the logarithmic derivative has been established by Halburd and Korhonen \cite{Halburd & Korhonen & 2006, Halburd & Korhonen & Ann. Acad. & 2006}, and Chiang and Feng \cite{Chiang & Feng & 2008}, independently. Since then many researchers have started to study the existence of entire and meromorphic solutions of complex difference as well as complex differential-difference equations, and obtained a number of important and interesting results in the literature (see \cite{Liu-Gao & Sci. Math. & 2019,Liu & Cao & Arch. Math. & 2012,Liu & Yang & 2016,Tang & Liao & 2007}).\vspace{1.2mm}
\par In 2012, Liu \textit{et al.} \cite{Liu & Cao & Arch. Math. & 2012} proved that the entire solutions of the Fermat type difference equation $f(z)^2+f(z+c)^2=1$ must be of the form $f(z)=\sin(Az+B)$, where $c(\neq0)$, $B\in\mathbb{C}$ and $A=(4k+1)\pi/2c$, $k$ is an integer. Later in $ 2013 $, Liu and Yang \cite{Liu & Yang & CMFT & 2013} extended this result by considering the Fermat-type difference equation $f(z)^2+P(z)^2f(z+c)^2=Q(z)$, where $P(z)$ and $Q(z)$ are two non-zero polynomials. In the same paper they have also proved that the finite order transcendental entire solutions of $f^{\prime}(z)^2+f(z+c)^2=1$ must assumes the form $f(z)=\sin(z\pm Bi)$, where $B$ is a constant and $c=2k\pi$ or $c=(2k+1)\pi$, $k$ is an integer.\vspace{1.2mm}
\par To the best of our knowledge, Gao \cite{Gao & Acta Math. Sinica & 2016}, in 2016, first studied about the entire solutions of system of differential difference equations and obtained the result as follows. If $(f_1,f_2)$ be a pair of finite order transcendental entire solution of the system of differential-difference equations \bea\label{e1.5}\begin{cases}
f_1^{\prime}(z)^2+f_2(z+c)^2=1\\f_2^{\prime}(z)^2+f_1(z+c)^2=1,\end{cases}\eea then $(f_1,f_2)$ must be of the form $(f_1,f_2)=(\sin(z-ib),\sin(z-ib_1)),\;\;\text{or}\;\; (\sin(z+ib),\sin(z+ib_1)),$ where $b,b_1$ are constants and $c=k\pi$, $k$ is a integer. In 2022, Wu-Hu \cite{Wu Hu & Analysis Math & 2022} characterized transcendental entire solutions for a system of Fermat type q-differential-difference equations which generalizes
the result of Gao \cite{Gao & Acta Math. Sinica & 2016}.\vspace{1.2mm}
\section{Solutions of Fermat-type differential-difference equations in several complex variables}
\par It has a long history to study partial differential equations which is a generalizations of the
well-known eikonal equation in real variable case. We insist the readers to go through \cite{Courtant Hilbert & 1962,Garabedian & Wiley & 1964,Rauch & 1991} and the references therein. Recently, the study of entire and meromorphic solutions of Fermat type partial differential equations in several complex variables has received considerable attention in the literature (see \cite{Li & Arch. Math. & 2008,Li & Int. J. & 2004,Saleeby & Analysis & 1999,Saleeby & Complex Var. & 2004,Li Complex Var & 1996}). In 1995, Khavinson pointed out that in $\mathbb{C}^2$, the entire solution of the Fermat type partial differential equations $f_{z_1}^2+f_{z_2}^2=1$ must necessarily be linear. After the development of the difference Nevanlinna theory in several
complex variables, specially the difference version of logarithmic derivative lemma (see \cite{Biancofiore & Stoll & 1981,Cao & Xu & Ann. Math. Pure Appl. & 2020,Korhonen & CMFT & 2012}), many mathematicians have started to study the existence and the precise form of entire and meromorphic solutions of different variants of Fermat type difference and partial differential difference equations, and obtained very remarkable and interesting results (see \cite{Xu & Cao & 2018,Xu & Cao & 2020,Xu & wang & 2020,Zheng-Xu & Analysis math & 2021,Haldar & 2023 & Mediterr,Haldar-Ahamed & Anal. Math. Phys. 2022,Xu Haldar & EJDE & 2023}).\vspace{1.2mm}
\par Hereinafter, we denote by $z+w=(z_1+w_1,z_2+w_2,\ldots,z_n+w_n)$, where $z=(z_1,z_2,\ldots,z_n),w=(w_1,w_2,\ldots,w_n)\in\mathbb{C}^n$, $n$ being a positive integer.\vspace{1.2mm}
\par In 2018, Xu-Cao \cite{Xu & Cao & 2018} proved the following results which are the extensions and generalizations of some previous results given by Liu \textit{et al.} \cite{Liu & Cao & Arch. Math. & 2012} from single complex variable to several complex variables. We list some of them here.\vspace{1.2mm} 
\begin{theoA}{\em\cite{Xu & Cao & 2018}}
Let $c=(c_1,c_2,\ldots,c_n)\in \mathbb{C}^n\setminus\{(0,0,\ldots,0)\}$. If $f:\mathbb{C}^n\rightarrow\mathbb{P}^{1}(\mathbb{C})$ be an entire solution with finite order of the Fermat type difference equation \bea\label{e2.1a} f(z)^2+f(z+c)^2=1,\eea the $f(z)$must assume the form $f(z)=\cos(L(z)+B)$, where $L$ is a linear function of the form $L(z)=a_1z_1+\cdots+ a_nz_n$ on $\mathbb{C}^n$ such that $L(c)=-\pi/2-2k\pi$ $(k\in\mathbb{Z})$, and $B$ is a constant on $\mathbb{C}$.
\end{theoA}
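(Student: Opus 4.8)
The plan is to exploit the algebraic factorization of the left-hand side together with the several-variable analogues of the Hadamard factorization theorem and Borel's theorem. First I would factor
\[
\big(f(z)+if(z+c)\big)\big(f(z)-if(z+c)\big)=1,
\]
which shows that both factors are zero-free entire functions of finite order (their product is the constant $1$, and $f$ together with its shift is of finite order). Since a zero-free entire function of finite order on $\mathbb{C}^n$ is the exponential of a polynomial, there is a polynomial $p$ with $f(z)+if(z+c)=e^{p(z)}$ and hence $f(z)-if(z+c)=e^{-p(z)}$. Solving this linear system gives
\[
f(z)=\tfrac12\big(e^{p(z)}+e^{-p(z)}\big),\qquad f(z+c)=\tfrac1{2i}\big(e^{p(z)}-e^{-p(z)}\big).
\]

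Next I would replace $z$ by $z+c$ in the first formula and equate the two expressions for $f(z+c)$, producing the exponential identity
\[
i\,e^{p(z+c)}+i\,e^{-p(z+c)}-e^{p(z)}+e^{-p(z)}\equiv 0.
\]
The heart of the argument is to show that $p$ is affine. I would apply the several-variable version of Borel's theorem: if $\deg p\ge 2$, then every pairwise difference of the four exponents $p(z),\,-p(z),\,p(z+c),\,-p(z+c)$ is a nonconstant polynomial --- for instance $p(z+c)-p(z)$ has degree $\deg p-1\ge 1$, while $p(z)\pm p(z+c)$ and $2p(z)$ have degree $\ge 2$ --- so Borel forces each of the nonzero coefficients $i,\,i,\,-1,\,1$ to vanish, a contradiction. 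Hence $\deg p\le 1$, and writing $p(z)=L_p(z)+b$ with $L_p$ linear we obtain $p(z+c)=p(z)+A$, where $A:=L_p(c)$.

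Substituting this back and collecting the coefficients of the independent exponentials $e^{p(z)}$ and $e^{-p(z)}$ (independent because $p$ is nonconstant, the constant case yielding only the degenerate solutions $f\equiv\pm1/\sqrt2$), the identity reduces to
\[
\big(i e^{A}-1\big)\,e^{p(z)}+\big(i e^{-A}+1\big)\,e^{-p(z)}\equiv 0,
\]
which forces $e^{A}=-i$, i.e.\ $A=L_p(c)=-i\big(\tfrac{\pi}{2}+2k\pi\big)$ for some $k\in\mathbb{Z}$. Finally, since $\cosh w=\cos(-iw)$, I would rewrite $f(z)=\tfrac12\big(e^{p(z)}+e^{-p(z)}\big)=\cos\big(L(z)+B\big)$ with $L(z)=-iL_p(z)$ and $B=-ib$; then $L(c)=-iA=-\tfrac{\pi}{2}-2k\pi$, which is exactly the asserted form. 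I expect the main obstacle to be the Borel step --- both invoking the several-variable version under the finite-order hypothesis and carefully checking that no pairing of the four exponents has constant difference when $\deg p\ge 2$ --- with a secondary bookkeeping difficulty in tracking the correct branch so that the constant emerges as $-\tfrac{\pi}{2}-2k\pi$ rather than a spurious sign variant.
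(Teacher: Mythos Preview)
Your overall strategy---factor, exponentiate, compare two expressions for $f(z+c)$, then invoke a Borel-type argument---is exactly the template the paper uses for its own Theorems \ref{t1}--\ref{t3} (via Lemmas \ref{lem3.2}, \ref{lem3.3}, \ref{lem3.1a}, \ref{lem3.7}). Note, however, that the paper does \emph{not} give a proof of Theorem~A; it is quoted from \cite{Xu & Cao & 2018}.

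There is a genuine gap in your Borel step, and it cannot be repaired because the statement as written is already incomplete. Your claim that ``$p(z+c)-p(z)$ has degree $\deg p-1\ge 1$'' is a one-variable fact that fails in $\mathbb{C}^n$ for $n\ge 2$: a polynomial of arbitrarily high degree can be $c$-periodic. Concretely, take $n=2$, $c=(1,0)$, and $p(z_1,z_2)=-\tfrac{i\pi}{2}z_1+z_2^{2}$. Then $p(z+c)-p(z)=-\tfrac{i\pi}{2}$ is constant, $e^{-i\pi/2}=-i$, and $f(z)=\cosh p(z)=\cos\big(\tfrac{\pi}{2}z_1+iz_2^{2}\big)$ is a finite-order entire solution of $f(z)^2+f(z+c)^2=1$ that is \emph{not} of the form $\cos(L(z)+B)$ with $L$ linear. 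In the exponential identity
\[
ie^{p(z+c)}+ie^{-p(z+c)}-e^{p(z)}+e^{-p(z)}\equiv 0,
\]
when $p(z+c)-p(z)=A$ is constant the four exponentials collapse to two, and Borel (or Lemma \ref{lem3.7}) only yields $e^{A}=-i$; it does \emph{not} force $p$ to be affine. The paper is aware of this phenomenon: its own Theorem~\ref{t1}(ii)--(iii) carry the $c$-periodic polynomial pieces $\Phi(z),\Psi(z)$ defined in \eqref{e2.5}--\eqref{e2.6}, Theorem~B explicitly includes the periodic term $\phi(z_2)$, and the bibliography lists a correction \cite{Xu & Cao & 2020} to the source of Theorem~A. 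What your argument \emph{does} prove is the correct version: $f(z)=\cos\big(L(z)+\Phi(z)+B\big)$ with $L$ linear, $\Phi$ a polynomial satisfying $\Phi(z+c)=\Phi(z)$, and $L(c)=-\tfrac{\pi}{2}-2k\pi$.
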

\begin{theoB}{\em\cite{Xu & Cao & 2018}}
Let $c=(c_1,c_2)\in\mathbb{C}^2$. If $f(z)$ be transcendental entire solution with finite order of the Fermat type partial differential-difference equation \bea\label{e2.2a} \left(\frac{\partial f(z_1,z_2)}{\partial z_1}\right)^2+f(z_1+c_1,z_2+c_2)^2=1,\eea $f$ must be of the form  $f(z_1,z_2)=\sin(Az_1+Bz_2+\phi(z_2))$, Where $A, B$ are constant on $\mathbb{C}$ satisfying $A^2=1$ and $Ae^{i(Ac_1+Bc_2)}=1$, and $\phi(z_2)$ is a polynomial in one variable $z_2$ such that $\phi(z_2)\equiv \phi(z_2 + c_2)$. In the special case whenever $c_2\neq0$, we have $f(z_1,z_2)=\sin (Az_1+Bz_2+ Constant)$.\end{theoB}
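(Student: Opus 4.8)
The plan is to linearize the quadratic relation by factoring, then to exploit the finite-order hypothesis through a Borel-type linear-independence argument for exponentials of polynomials. First I would factor the left-hand side of \eqref{e2.2a} as
\[
\left(\frac{\partial f}{\partial z_1} + i\, f(z+c)\right)\left(\frac{\partial f}{\partial z_1} - i\, f(z+c)\right) = 1 .
\]
Both factors are entire and their product is the constant $1$, so each is a zero-free entire function; hence there is an entire function $p=p(z_1,z_2)$ with $\frac{\partial f}{\partial z_1} + i f(z+c) = e^{p}$ and $\frac{\partial f}{\partial z_1} - i f(z+c) = e^{-p}$, and therefore
\[
\frac{\partial f}{\partial z_1} = \frac{e^{p}+e^{-p}}{2}, \qquad f(z+c) = \frac{e^{p}-e^{-p}}{2i}.
\]
Since $f$ has finite order, so do $\partial f/\partial z_1$ and the shift $f(z+c)$ (order is preserved under differentiation and under shifting, by the several-variable difference Nevanlinna theory), so $e^{p}$ has finite order and $p$ must be a polynomial.

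Next I would produce a functional equation for $p$ alone. Evaluating the first formula at $z+c$ gives $\frac{\partial f}{\partial z_1}(z+c)=\tfrac12\bigl(e^{p(z+c)}+e^{-p(z+c)}\bigr)$, while differentiating the second formula with respect to $z_1$ gives $\frac{\partial f}{\partial z_1}(z+c)=\tfrac{1}{2i}\,p_{z_1}\bigl(e^{p}+e^{-p}\bigr)$. Equating the two yields
\[
i\left(e^{p(z+c)}+e^{-p(z+c)}\right) = p_{z_1}\left(e^{p}+e^{-p}\right),
\]
where $p_{z_1}=\partial p/\partial z_1$ is a polynomial, hence of order zero, while $e^{\pm p(z+c)},e^{\pm p}$ have positive order once $p$ is nonconstant (and $p$ cannot be constant, else the left side is zero-free while the right side vanishes). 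I would read this as a vanishing linear combination of four exponentials with polynomial coefficients and apply the Borel-type linear-independence theorem. Comparing which exponents agree modulo an additive constant forces $p(z+c)-p(z)\equiv\eta$ for a constant $\eta$; the alternative grouping $p(z+c)+p(z)\equiv\text{const}$ is excluded because the leading terms would add rather than cancel.

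With $p(z+c)=p(z)+\eta$, matching the coefficients of $e^{p}$ and $e^{-p}$ gives $p_{z_1}=ie^{\eta}=ie^{-\eta}$, so $e^{2\eta}=1$ and $p_{z_1}$ equals a nonzero constant $\alpha$ with $\alpha^2=-1$. Thus $p(z_1,z_2)=\alpha z_1 + Q(z_2)$ for a polynomial $Q$. Integrating $\frac{\partial f}{\partial z_1}=\tfrac12(e^{p}+e^{-p})$ in $z_1$ and matching the result against the formula for $f(z+c)$ forces the integration term (a function of $z_2$ alone) to vanish, leaving $f=(e^{p}-e^{-p})/(2\alpha)$, which I would rewrite as $f(z_1,z_2)=\sin(Az_1+Bz_2+\phi(z_2))$ with $A=\pm1$, so $A^2=1$. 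The same coefficient comparison yields $e^{Q(z_2+c_2)-Q(z_2)}=\text{const}$, so $Q(z_2+c_2)-Q(z_2)$ is constant; absorbing its linear part into $B$ makes $\phi$ satisfy $\phi(z_2)\equiv\phi(z_2+c_2)$, and tracking the remaining constants reproduces $Ae^{i(Ac_1+Bc_2)}=1$. Finally, when $c_2\neq0$ a periodic polynomial is necessarily constant, giving $f=\sin(Az_1+Bz_2+\text{const})$.

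The step I expect to be the main obstacle is the rigorous Borel/linear-independence analysis in several variables: one must justify that the four exponents are pairwise distinct modulo additive constants except for the single admissible pairing, and carefully rule out the degenerate possibilities $p\equiv\text{const}$ and $p_{z_1}\equiv0$, together with confirming that the polynomial coefficients genuinely have strictly smaller growth than the exponential terms so that the independence theorem applies.
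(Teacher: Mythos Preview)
Theorem~B is a result quoted from \cite{Xu & Cao & 2018}; this paper does not supply a proof of it, so there is no in-paper argument to compare against directly. That said, your outline is correct and is precisely the template the present paper uses for its own results (Theorems~\ref{t1}--\ref{t3}): factor the Fermat identity, invoke Lemmas~\ref{lem3.2} and \ref{lem3.3} to force the zero-free factors to be $e^{\pm p}$ with $p$ a polynomial, derive a linear relation among exponentials by differentiating/shifting, and then run a Borel-type case analysis (here via Lemmas~\ref{lem3.1a} and \ref{lem3.7}) to pin down $p$. Your treatment of the pairing $p(z+c)+p(z)\equiv\text{const}$, the exclusion of $p$ constant, and the elimination of the $z_2$-only integration term all match what the cited source and this paper's analogous proofs do; the only point to tighten is to state explicitly which several-variable Borel lemma you are invoking (the paper uses the Hu--Li--Yang version, Lemma~\ref{lem3.7}) so that the ``polynomial coefficients have smaller growth'' hypothesis is verified rather than asserted.
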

\par Motivated be Theorems A and B, Xu \textit{et al.}, in 2020 \cite{Xu-Liu-Li-JMAA-2020}, studied about entire solutions for the system of the Fermat type difference as well as partial differential difference equations in $\mathbb{C}^2$ and proved the following two results.
\begin{theoC}{\em\cite{Xu-Liu-Li-JMAA-2020}}
Let $c=(c_1,c_2)$ be a constant in $\mathbb{C}^2$. Then any pair of transcendental entire solutions with finite order for the system of Fermat-type difference equations \bea\label{e2.3a} \begin{cases} f_1(z_1,z_2)^2+f_2(z_1+c_1,z_2+c_2)^2=1,\\
f_2(z_1,z_2)^2+f_1(z_1+c_1,z_2+c_2)^2=1
\end{cases}\eea have the following forms \beas (f_1(z),f_2(z))=\left(\frac{e^{L(z)+B_1}+e^{-(L(z)+B_1)}}{2},\frac{A_{21}e^{L(z)+B_1}+A_{22}e^{-(L(z)+B_1)}}{2}\right),\eeas where $L(z)=\alpha_1z_1+\alpha_2z_2$, $B_1$ is a constant in $\mathbb{C}$, and $c, A_{21}, A_{22}$ satisfy one of the following cases \begin{enumerate}
\item [(i)] $L(c)=2k\pi i$, $A_{21}=-i$ and $A_{22}=i$, or $L(c)=(2k+1)\pi i$, $A_{21}=i$ and $A_{22}=-i$, here and below $k$ is an integer;
\item[(ii)] $L(c)=(2k+1/2)\pi i$, $A_{21}=-1$ and $A_{22}=-1$, or $L(c)=(2k-1/2)\pi i$, $A_{21}=1$ and $A_{22}=1$.\end{enumerate}\end{theoC}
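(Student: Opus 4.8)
The plan is to exploit the sum-of-squares factorization of each equation together with the fact that a zero-free entire function of finite order is the exponential of a polynomial. Writing $c=(c_1,c_2)$, I would first rewrite the two equations of \eqref{e2.3a} as
\beas
(f_1(z)+if_2(z+c))(f_1(z)-if_2(z+c))&=&1,\\
(f_2(z)+if_1(z+c))(f_2(z)-if_1(z+c))&=&1.
\eeas
In each line the two factors are entire and, since their product is the constant $1$, zero-free; as $f_1,f_2$ have finite order, each factor equals $e^{\pm p(z)}$, respectively $e^{\pm q(z)}$, for some polynomials $p,q$ on $\mathbb{C}^2$. Solving the two linear systems gives
\beas
&& f_1(z)=\frac{e^{p(z)}+e^{-p(z)}}{2},\quad f_2(z+c)=\frac{e^{p(z)}-e^{-p(z)}}{2i},\\
&& f_2(z)=\frac{e^{q(z)}+e^{-q(z)}}{2},\quad f_1(z+c)=\frac{e^{q(z)}-e^{-q(z)}}{2i}.
\eeas

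Next I would produce compatibility relations by computing $f_1(z+c)$ and $f_2(z+c)$ in two ways: shifting the formulas for $f_1(z)$ and $f_2(z)$ by $c$ and equating with the formulas already obtained for $f_1(z+c)$ and $f_2(z+c)$ yields
\beas
i\,e^{p(z+c)}+i\,e^{-p(z+c)}&=&e^{q(z)}-e^{-q(z)},\\
i\,e^{q(z+c)}+i\,e^{-q(z+c)}&=&e^{p(z)}-e^{-p(z)}.
\eeas

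The core of the argument, and the step I expect to be hardest, is the analysis of these exponential--polynomial identities. Each is a relation of the form $\sum_j\lambda_je^{g_j}\equiv0$ with four non-zero coefficients $\lambda_j\in\{\pm1,\pm i\}$, so the several-variable Borel/linear-independence lemma for exponentials of polynomials (available through the several-variable difference Nevanlinna theory cited above) forces the exponents to coincide in pairs, up to additive constants. Because $f_1,f_2$ are transcendental, $p$ and $q$ are non-constant, so the pair $\pm p(z+c)$ cannot match with itself (that would make $p$ constant); hence, from the first identity, either $p(z+c)=q(z)+a$ with $e^{a}=-i$, or $p(z+c)=-q(z)+b$ with $e^{b}=i$, and the second identity gives the analogous dichotomy with $p,q$ interchanged. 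Examining the four combinations, the two ``mixed'' ones force $p(z+2c)+p(z)$ to be constant, which is impossible for a non-constant polynomial; only the two ``matched'' combinations survive.

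Finally, in each surviving combination the relation collapses to $p(z+2c)-p(z)=\text{const}$, which forces $p$ (hence $q$) to be affine, say $p(z)=L(z)+B_1$ with $L(z)=\alpha_1z_1+\alpha_2z_2$; substituting back recovers the stated form of $(f_1,f_2)$ with $A_{21}A_{22}=1$. The combination $p(z+c)=-q(z)+b,\ q(z+c)=-p(z)+b'$ then gives $L(c)=k\pi i$ and $A_{21}=e^{L(c)-b}$, $A_{22}=e^{b-L(c)}$, which reduce to case (i) after splitting $k$ by parity, while the combination $p(z+c)=q(z)+a,\ q(z+c)=p(z)+a'$ gives $L(c)=(2k-1)\pi i/2$ and $A_{21},A_{22}\in\{\pm1\}$, which reduce to case (ii). Tracking the values of $a,b,a',b'$, all determined modulo $2\pi i$ by $e^{a}=-i$, $e^{b}=i$, and so on, produces exactly the sign assignments recorded in (i) and (ii) and finishes the proof.
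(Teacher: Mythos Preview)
Your overall plan matches the paper's approach to its Theorem~\ref{t1} (the paper does not prove Theorem~C itself; it quotes it and then proves the generalization Theorem~\ref{t1}): factor each equation, write the zero-free finite-order factors as $e^{\pm p}$, $e^{\pm q}$ with $p,q$ polynomials, derive the two four-term exponential compatibility identities, and force the exponents to pair up. The paper organizes that pairing step via Lemma~\ref{lem3.1a} rather than a bare Borel lemma, but your four-subcase split (two ``matched'' combinations surviving, two ``mixed'' ones excluded because $p(z+2c)+p(z)=\text{const}$ is impossible for non-constant $p$) is exactly the content of the paper's Subcases~2.1--2.4.

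There is, however, a genuine gap in your final step. The implication ``$p(z+2c)-p(z)=\text{const}$ forces $p$ affine'' is valid in one complex variable but \emph{fails} in $\mathbb{C}^2$: any polynomial $p(z)=L(z)+\Phi(c_2z_1-c_1z_2)+B_1$, with $L$ linear and $\Phi$ an arbitrary one-variable polynomial, satisfies $p(z+2c)-p(z)=2L(c)$. The paper is aware of this and builds precisely such $c$-periodic polynomial pieces $\Phi,\Psi$ (defined in \eqref{e2.5}--\eqref{e2.6}) into the conclusions of Theorem~\ref{t1}; specializing that theorem to $g_1=g_2=0$ one sees that $f_1(z)=\cosh\bigl(L_1(z)+\Phi(z)+B_1\bigr)$ with nontrivial $\Phi$ is a genuine transcendental finite-order solution of \eqref{e2.3a}. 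In other words, Theorem~C as quoted here omits solutions, and your argument cannot close because the affine conclusion is simply not true in two variables. The repair is to replace ``affine'' by ``linear plus a $c$-periodic polynomial'' and carry that extra term through the final bookkeeping, which is exactly what the paper does in the proof of Theorem~\ref{t1}.
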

\begin{theoD}{\em\cite{Xu-Liu-Li-JMAA-2020}}
Let $c=(c_1,c_2)$ be a constant in $\mathbb{C}^2$. Then any pair of transcendental entire solutions with finite order for the system of Fermat-type partial differential-difference equations\bea\label{e2.4a} \begin{cases} \left(\frac{\partial f_1(z_1,z_2)}{\partial z_1}\right)^2+f_2(z_1+c_1,z_2+c_2)^2=1,\\
\left(\frac{\partial f_2(z_1,z_2)}{\partial z_1}\right)^2+f_1(z_1+c_1,z_2+c_2)^2=1\end{cases}\eea
have the following forms \beas (f_1(z),f_2(z))=\left(\frac{e^{L(z)+B_1}+e^{-(L(z)+B_1)}}{2},\frac{A_{21}e^{L(z)+B_1}+A_{22}e^{-(L(z)+B_1)}}{2}\right),\eeas where $L(z)=\alpha_1z_1+\alpha_2z_2$, $B_1$ is a constant in $\mathbb{C}$, and $c, A_{21}, A_{22}$ satisfy one of the following cases \begin{enumerate}
\item [(i)] $A_{21}=-i$, $A_{22}=i$, and $a_1=i$, $L(c)=\left(2k+\frac{1}{2}\right)\pi i$, or $a_1=-i$, $L(c)=\left(2k-\frac{1}{2}\right)\pi i$;
\item[(ii)] $A_{21}=i$, $A_{22}=-i$, and $a_1=i$, $L(c)=\left(2k-\frac{1}{2}\right)\pi i$, or $a_1=-i$, $L(c)=\left(2k+\frac{1}{2}\right)\pi i$;
\item [(iii)] $A_{21}=1$, $A_{22}=1$, and $a_1=i$, $L(c)=2k\pi i$, or $a_1=-i$, $L(c)=\left(2k+1\right)\pi i$
\item [(iv)] $A_{21}=-1$, $A_{22}=-1$, and $a_1=i$, $L(c)=\left(2k+1\right)\pi i i$, or $a_1=-i$, $L(c)=2k\pi i$.\end{enumerate}
\end{theoD}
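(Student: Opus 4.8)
The plan is to reduce each scalar Fermat equation to an exponential representation via the factorization $u^2+v^2=(u+iv)(u-iv)$, and then to pin down the exponents using a Borel-type theorem on exponential polynomials. Writing the first equation of the system \eqref{e2.4a} as
\[
\left(\frac{\partial f_1}{\partial z_1}+if_2(z+c)\right)\left(\frac{\partial f_1}{\partial z_1}-if_2(z+c)\right)=1,
\]
I note that each factor is entire of finite order (partial differentiation and translation preserve both properties) and that their product is the constant $1$, so neither factor vanishes. Hence there is an entire function $p$ with $\partial f_1/\partial z_1+if_2(z+c)=e^{p}$ and $\partial f_1/\partial z_1-if_2(z+c)=e^{-p}$; the second equation analogously produces an entire $q$. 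Solving the two resulting linear systems gives
\[
\frac{\partial f_1}{\partial z_1}=\frac{e^{p}+e^{-p}}{2},\quad f_2(z+c)=\frac{e^{p}-e^{-p}}{2i},\quad \frac{\partial f_2}{\partial z_1}=\frac{e^{q}+e^{-q}}{2},\quad f_1(z+c)=\frac{e^{q}-e^{-q}}{2i}.
\]
Since the left-hand members are entire of finite order and $e^{\pm p},e^{\pm q}$ never vanish, $p$ and $q$ must be polynomials; transcendence of $f_1,f_2$ forbids them from being constant.

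Next I would couple the two equations. Replacing $z$ by $z-c$ in the shifted formulas gives $f_1(z)=\tfrac{1}{2i}(e^{q(z-c)}-e^{-q(z-c)})$ and $f_2(z)=\tfrac{1}{2i}(e^{p(z-c)}-e^{-p(z-c)})$; differentiating in $z_1$ and comparing with the formulas for $\partial f_1/\partial z_1$ and $\partial f_2/\partial z_1$ yields, with $P:=p(z-c)$ and $Q:=q(z-c)$,
\[
\frac{Q_{z_1}}{i}\left(e^{Q}+e^{-Q}\right)=e^{p}+e^{-p},\qquad \frac{P_{z_1}}{i}\left(e^{P}+e^{-P}\right)=e^{q}+e^{-q}.
\]
Applying Borel's theorem on linear combinations of exponentials of polynomials (in its version for several complex variables) to the first identity: the coefficient of $e^{p}$ is a nonzero constant, so $p$ must equal $\pm Q$ up to an additive constant, and equating the two surviving coefficients forces $Q_{z_1}$, hence $q_{z_1}$, to be a constant taking one of the values $\pm i$; the second identity gives $p_{z_1}=\pm i$ the same way. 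Thus $p$ and $q$ are affine in $z_1$ with slope $\pm i$, and the induced relations $p(z)=\pm q(z-c)+\gamma$, $q(z)=\pm p(z-c)+\delta$ combine to $p(z)-p(z-2c)\equiv\text{const}$, which (as $p$ is a polynomial) forces the remaining $z_2$-dependence to be linear. Hence $p=L(z)+B_1$ for a linear $L(z)=\alpha_1 z_1+\alpha_2 z_2$ with $\alpha_1=a_1=\pm i$, and $q$ differs from $L$ by a constant.

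Finally I would extract the constants. Substituting $p=L+B_1$ into $f_2(z+c)=\tfrac{1}{2i}(e^{p}-e^{-p})$ and matching against the asserted form $f_2(z)=\tfrac12(A_{21}e^{L(z)+B_1}+A_{22}e^{-(L(z)+B_1)})$ evaluated at $z+c$ expresses $A_{21},A_{22}$ through $e^{\pm L(c)}$ together with the sign choices made above, while the requirement that $(\partial f_1/\partial z_1)^2+f_2(z+c)^2\equiv 1$ forces the arguments of the resulting $\cosh$ and $\sinh$ to agree, which pins $L(c)$ modulo $2\pi i$. Running the four sign patterns ($\alpha_1=\pm i$ against the two admissible matchings of exponents) through both equations simultaneously produces precisely the four cases (i)--(iv). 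I expect the main obstacle to be exactly this final bookkeeping: one must carry the Borel matching through \emph{both} equations at once so that the values of $A_{21},A_{22},a_1$ and the admissible $L(c)$ are mutually consistent, discarding the sign combinations that turn out to be incompatible; this same consistency check is what rules out nonlinear exponents and disposes of the degenerate configurations of $c$.
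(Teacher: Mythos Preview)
The paper does not prove Theorem~D itself---it is quoted from \cite{Xu-Liu-Li-JMAA-2020} as background---but it does prove the generalization Theorem~\ref{t2} (set $k=1$, $g_1=g_2\equiv 0$ there), and your outline coincides with that proof: factorize each Fermat equation as $(u+iv)(u-iv)=1$, extract polynomial exponents via Lemmas~\ref{lem3.2}--\ref{lem3.3}, couple the two representations by differentiating the shifted formulas, and apply the Borel-type Lemma~\ref{lem3.1a} (or \ref{lem3.7}) to match exponents and run the four-way sign case analysis that produces (i)--(iv). So the approach is the same.

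One small caveat worth flagging in your write-up: the step ``$p(z)-p(z-2c)\equiv\text{const}$ forces the remaining $z_2$-dependence to be linear'' tacitly uses $c_2\neq 0$. When $c_2=0$ an arbitrary polynomial in $z_2$ survives, which is exactly the extra $H_j(s)$ term appearing in Theorem~\ref{t2}(ii)--(iii); Theorem~D as stated suppresses this degenerate case, so your sketch still matches the stated conclusion, but the gap is in the theorem rather than in your argument.
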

\par Theorems A--C suggest us the following open questions.
\begin{ques}\label{q1}
Can we generalize Theorem C from $\mathbb{C}^2$ to $\mathbb{C}^n$, where $n$ is any positive integer?
\end{ques}
\begin{ques}\label{q2}
Can we characterize the entire solutions of the system of equations \eqref{e2.4a} when the right hand sides of each of the equations of \eqref{e2.4a} are replaced by $e^{g_1(z)}$ and $e^{g_2(z)}$, respectively, where $g_1(z)$ and $g_2(z)$ are any two polynomials in $\mathbb{C}^n$?
\end{ques}
\par Inspired by the above two questions and utilizing the Nevanlinna theory and difference Nevanlinna theory for several complex variables (see \cite{Cao & Korhonen & 2016,Korhonen & CMFT & 2012}), we consider the following system of equations \bea \label{e2.1}\begin{cases} f_1(z)^2+f_2(z+c)^2=e^{g_1(z)},\\	f_2(z)^2+f_1(z+c)^2=e^{g_2(z)},\end{cases}\eea
and obtain Theorem \ref{t1} below. Before we state it, let us set the following. \bea\label{e2.5} \Phi(z)&&=\sum_{i_1=1}^{^nC_2}H_{i_1}^2(s_{i_1}^2)+\sum_{i_2=1}^{^nC_3}H_{i_2}^3(s_{i_2}^3)+\sum_{i_3=1}^{^nC_4}H_{i_3}^4(s_{i_3}^4)+\cdots\nonumber\\&&+\sum_{i_{n-2}=1}^{^nC_{n-1}}H_{i_{n-2}}^{n-1}(s_{i_{n-2}}^{n-1})+H_{n-1}^n(s_{n-1}^{n}),\eea
\bea\label{e2.6} \Psi(z)&&=\sum_{i_1=1}^{^nC_2}L_{i_1}^2(t_{i_1}^2)+\sum_{i_2=1}^{^nC_3}L_{i_2}^3(t_{i_2}^3)+\sum_{i_3=1}^{^nC_4}L_{i_3}^4(t_{i_3}^4)+\cdots\nonumber\\&&+\sum_{i_{n-2}=1}^{^nC_{n-1}}L_{i_{n-2}}^{n-1}(t_{i_{n-2}}^{n-1})+L_{n-1}^n(t_{n-1}^{n}),\eea where $H_{i_1}^2$ is a polynomial in $s_{i_1}^2:=d_{i_1j_1}^2z_{j_1}+d_{i_1j_2}^2z_{j_2}$ with $d_{i_1j_1}^2c_{j_1}+d_{i_1j_2}^2c_{j_2}=0$, $1\leq i_1\leq \;^nC_2$, $1\leq j_1<j_2\leq n$, $H_{i_2}^3$ is a polynomial in $s_{i_2}^3:=d_{i_2j_1}^3z_{j_1}+d_{i_2j_2}^3z_{j_2}+d_{i_2j_3}^3z_{j_3}$ with $d_{i_2j_1}^3c_{j_1}+d_{i_2j_2}^3c_{j_2}+d_{i_2j_3}^3z_3=0$, $1\leq i_2\leq \;^nC_3$, $1\leq j_1<j_2<j_3\leq n\ldots$, $H_{i_{n-2}}^{n-1}$ is a polynomial in $s_{i_{n-2}}^{n-1}:=d_{i_{n-2}j_1}z_{j_1}+d_{i_{n-2}j_2}z_{j_2}+\cdots+d_{i_{n-2}j_{n-1}}z_{j_{n-1}}$ with $d_{i_{n-2}j_1}c_{j_1}+d_{i_{n-2}j_2}c_{j_2}+\cdots+d_{i_{n-2}j_{n-1}}c_{j_{n-1}}=0$, $1\leq i_{n-2}\leq \;^nC_{n-1}$, $1\leq j_1<j_2<\cdots<j_{n-1}\leq n$ and $H_{n-1}^{n}$ is a polynomial in $s_{n-1}^n:=d_{i_{n-1}1}z_1+d_{i_{n-1}2}z_2+\cdots+d_{i_{n-1}n}z_n$ with $d_{i_{n-1}1}c_1+d_{i_{n-1}2}c_2+\cdots+d_{i_{n-1}n}c_n=0$, $L_{i_1}^2$ is a polynomial in $t_{i_1}^2:=e_{i_1j_1}^2z_{j_1}+e_{i_1j_2}^2z_{j_2}$ with $e_{i_1j_1}^2c_{j_1}+e_{i_1j_2}^2c_{j_2}=0$, $1\leq i_1\leq \;^nC_2$, $1\leq j_1<j_2\leq n$, $H_{i_2}^3$ is a polynomial in $t_{i_2}^3:=e_{i_2j_1}^3z_{j_1}+e_{i_2j_2}^3z_{j_2}+e_{i_2j_3}^3z_{j_3}$ with $e_{i_2j_1}^3c_{j_1}+e_{i_2j_2}^3c_{j_2}+e_{i_2j_3}^3z_3=0$, $1\leq i_2\leq \;^nC_3$, $1\leq j_1<j_2<j_3\leq n\ldots$, $L_{i_{n-2}}^{n-1}$ is a polynomial in $t_{i_{n-2}}^{n-1}:=e_{i_{n-2}j_1}z_{j_1}+e_{i_{n-2}j_2}z_{j_2}+\cdots+e_{i_{n-2}j_{n-1}}z_{j_{n-1}}$ with $e_{i_{n-2}j_1}c_{j_1}+e_{i_{n-2}j_2}c_{j_2}+\cdots+e_{i_{n-2}j_{n-1}}c_{j_{n-1}}=0$, $1\leq i_{n-2}\leq \;^nC_{n-1}$, $1\leq j_1<j_2<\cdots<j_{n-1}\leq n$ and $L_{n-1}^{n}$ is a polynomial in $t_{n-1}^n:=e_{i_{n-1}1}z_1+e_{i_{n-1}2}z_2+\cdots+e_{i_{n-1}n}z_n$ with $e_{i_{n-1}1}c_1+e_{i_{n-1}2}c_2+\cdots+e_{i_{n-1}n}c_n=0$, where for each $k$, the representation of $s_i^k$ and $t_i^k$ in terms of the conditions of $j_1,j_2,\ldots,j_k$ are unique.\vspace{1.2mm}
\par Now we state our first result as follows.
\begin{theo}\label{t1}
	Let $c=(c_1, c_2,\ldots,c_n)\in \mathbb{C}^n$ and $g_1(z)$, $g_1(z)$ be any two polynomials in $\mathbb{C}^n$. If $(f_1,f_2)$ is a pair of transcendental entire solution with finite order of simultaneous Fermat-type difference equation \eqref{e2.1}, then one of the following cases must occur.
	\begin{enumerate}
		\item[\textbf{(i)}] $g_1(z)=L(z)+\Phi(z)+B_1$, $g_2(z)=L(z)+\Phi(z)+B_2$, where $L(z)=\sum_{j=1}^{n}a_jz_j$, $\Phi(z)$ is a polynomial in $\mathbb{C}^n$ as defined in \eqref{e2.5}, $a_j, B_1,B_2\in\mathbb{C}$ for $j=1,2,\ldots,n$ and 
		\beas f_1(z)=\frac{\xi_1^2+1}{2\xi_1}e^{\frac{1}{2}[L(z)+\Phi(z)+B_1]},\; f_2(z)=\frac{\xi_2^2+1}{2\xi_2}e^{\frac{1}{2}[L(z)+\Phi(z)+B_2]},\eeas where $\xi_j$ is a non-zero complex number in $\mathbb{C}$ such that $\xi_j^4\neq1$, $j=1,2$, and \beas e^{L(c)}=-\frac{(\xi_1^2-1)(\xi_2^2-1)}{(\xi_1^2+1)(\xi_2^2+1)}\;\;\text{and}\;\; e^{B_1-B_2}=\frac{\xi_1^2(\xi_2^4-1)}{\xi_2^2(\xi_1^4-1)}.\eeas
		\item[\textbf{(ii)}] $g_1(z)=L(z)+R(z)+D_1$, $g_2(z)=L(z)+R(z)+D_2$, where $L(z)=L_1(z)+L_2(z)$, $R(z)=\Phi(z)+\Psi(z)$, $D_1=B_1+B_2$, $D_2=B_3+B_4$ with $L_1(z)+\Phi(z)\neq L_2(z)+\Psi(z)$, $L_j(z)=a_{j1}z_1+a_{j2}z_2$, $\Phi(z),\Psi(z)$ are defined in \eqref{e2.6} and \eqref{e2.6}, respectively, $a_{j1}, a_{j2}, D_i\in\mathbb{C}$, $j=1,2,\ldots,n$ $i=1,2$, and \beas f_1(z)=\frac{1}{2}\left[e^{L_1(z)+\Phi(z)+B_1}+e^{L_2(z)+\Psi(z)+B_3}\right],\eeas \beas f_2(z)=\frac{1}{2}\left[e^{L_1(z)+\Phi(z)+B_2}+e^{L_2(z)+\Psi(z),+B_4}\right],\eeas where $L_1(z),L_2(z)$, $B_1,B_2,B_3,B_4$ satisfy one of the following relations.\vspace{1.2mm} \begin{enumerate}
			\item [\textbf{(a)}] $e^{L_1(c)}=i$, $e^{L_2(c)}=i$, $e^{B_1-B_2}=-1$, $e^{B_3-B_4}=1$.\vspace{1.2mm}
			\item [\textbf{(b)}] $e^{L_1(c)}=i$, $e^{L_2(c)}=-i$, $e^{B_1-B_2}=-1$, $e^{B_3-B_4}=-1$.\vspace{1.2mm}
			\item [\textbf{(c)}] $e^{L_1(c)}=-i$, $e^{L_2(c)}=i$, $e^{B_1-B_2}=1$, $e^{B_3-B_4}=1$.\vspace{1.2mm}
			\item [\textbf{(d)}] $e^{L_1(c)}=-i$, $e^{L_2(c)}=-i$, $e^{B_1-B_2}=1$, $e^{B_3-B_4}=-1$.\vspace{1.2mm}
		\end{enumerate}
		\item[\textbf{(iii)}] $g_1(z)=L(z)+R(z)+D_1$, $g_2(z)=L(z)+R(z)+D_2$, where $L(z)=L_1(z)+L_2(z)$, $R(z)=\Psi(z)+\chi(z)$, $D_1=B_1+B_3$, $D_2=B_2+B_4$ with $L_1(z)+\Phi(z)\neq L_2(z)+\Psi(z)$, $L_j(z)=a_{j1}z_1+a_{j2}z_2$, $\Phi(z),\Psi(z)$ are defined in \eqref{e2.6} and \eqref{e2.6},respectively, $a_{j1}, a_{j2}, D_i\in\mathbb{C}$, $j=1,2,\ldots,n$ $i=1,2$, and \beas f_1(z)=\frac{1}{2}\left[e^{L_1(z)+\Phi(z)+B_1}+e^{L_2(z)+\chi(z)+B_3}\right],\eeas \beas f_2(z)=\frac{1}{2}\left[e^{L_1(z)+\Phi(z)+B_2}+e^{L_2(z)+\chi(z),+B_4}\right],\eeas where $L_1(z),L_2(z)$, $B_1,B_2,B_3,B_4$ satisfy one of the following relations.\vspace{1.2mm} \begin{enumerate}
			\item [\textbf{(a)}] $e^{L_1(c)}=1$, $e^{L_2(c)}=1$, $e^{B_1-B_2}=-i$, $e^{B_3-B_4}=i$.\vspace{1.2mm}
			\item [\textbf{(b)}] $e^{L_1(c)}=1$, $e^{L_2(c)}=-1$, $e^{B_1-B_2}=-i$, $e^{B_3-B_4}=-i$.\vspace{1.2mm}
			\item [\textbf{(c)}] $e^{L_1(c)}=-1$, $e^{L_2(c)}=1$, $e^{B_1-B_2}=i$, $e^{B_3-B_4}=i$.\vspace{1.2mm}
			\item [\textbf{(d)}] $e^{L_1(c)}=-1$, $e^{L_2(c)}=-1$, $e^{B_1-B_2}=i$, $e^{B_3-B_4}=-i$.\end{enumerate}\end{enumerate}
\end{theo}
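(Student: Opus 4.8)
The plan is to reduce the system to a problem about finite sums of exponentials of polynomials and then pin everything down with a Borel-type lemma in several variables. First I would exploit the fact that $e^{g_1(z)}$ and $e^{g_2(z)}$ vanish nowhere. Factoring
\[
f_1(z)^2+f_2(z+c)^2=\bigl(f_1(z)+if_2(z+c)\bigr)\bigl(f_1(z)-if_2(z+c)\bigr),
\]
and similarly for the second equation, each factor is a zero-free entire function of finite order, hence by Hadamard factorization is the exponential of a polynomial. Thus there are polynomials $p_1,q_1,p_2,q_2$ on $\mathbb{C}^n$ with
\[
f_1(z)+if_2(z+c)=e^{p_1(z)},\qquad f_1(z)-if_2(z+c)=e^{q_1(z)},
\]
\[
f_2(z)+if_1(z+c)=e^{p_2(z)},\qquad f_2(z)-if_1(z+c)=e^{q_2(z)},
\]
where $p_1+q_1=g_1$ and $p_2+q_2=g_2$ up to an irrelevant additive constant. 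Solving linearly gives $f_1=\tfrac12(e^{p_1}+e^{q_1})$, $f_2=\tfrac12(e^{p_2}+e^{q_2})$, together with $f_2(z+c)=-\tfrac{i}{2}(e^{p_1(z)}-e^{q_1(z)})$ and $f_1(z+c)=-\tfrac{i}{2}(e^{p_2(z)}-e^{q_2(z)})$. Transcendence of $(f_1,f_2)$ guarantees the relevant exponents are non-constant, and finite order is exactly what legitimizes the Hadamard step.

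Next I would generate two coupling identities by computing $f_1(z+c)$ and $f_2(z+c)$ in two different ways, namely by shifting the formulas for $f_1,f_2$ and equating with the expressions just obtained. This yields
\[
e^{p_1(z+c)}+e^{q_1(z+c)}+ie^{p_2(z)}-ie^{q_2(z)}=0,
\]
\[
e^{p_2(z+c)}+e^{q_2(z+c)}+ie^{p_1(z)}-ie^{q_1(z)}=0.
\]
The core of the argument is to apply the several-variable Borel/exponential-sum lemma (available from the difference Nevanlinna theory in $\mathbb{C}^n$, e.g. the cited Cao--Korhonen and Korhonen references): a vanishing finite sum $\sum_k c_k e^{h_k}$ with nonzero constant coefficients $c_k$ and polynomial exponents $h_k$ forces the $h_k$ to fall into classes on which they agree up to an additive constant, with the coefficients summing to zero within each class. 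Since the coefficients here are $1,1,i,-i$, no singleton class is possible, so in each identity the four exponents must pair off into two-element classes.

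I would then enumerate the admissible pairings. The self-pairing $\{p_1(z+c),q_1(z+c)\}\cup\{p_2(z),q_2(z)\}$ forces $p_1-q_1$ and $p_2-q_2$ to be constant, so $f_1,f_2$ collapse to single exponentials; writing $\xi_j=e^{(p_j-q_j)/2}$ and combining with the companion identity produces \textbf{Case (i)}, including the relations for $e^{L(c)}$ and $e^{B_1-B_2}$. The two remaining cross-pairings, $\{p_1(z+c),p_2(z)\}\cup\{q_1(z+c),q_2(z)\}$ and $\{p_1(z+c),q_2(z)\}\cup\{q_1(z+c),p_2(z)\}$, keep $f_1,f_2$ genuine two-term exponential sums with shared non-constant parts. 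Imposing the coefficient-cancellation equations from \emph{both} coupling identities and eliminating yields $e^{2L_j(c)}=-1$ for one cross-pairing and $e^{2L_j(c)}=+1$ for the other, hence $e^{L_j(c)}\in\{\pm i\}$ giving \textbf{Case (ii)} and $e^{L_j(c)}\in\{\pm1\}$ giving \textbf{Case (iii)}; the two independent sign choices for $e^{L_1(c)}$ and $e^{L_2(c)}$, together with the induced values of $e^{B_1-B_2}$ and $e^{B_3-B_4}$, reproduce sub-cases \textbf{(a)}--\textbf{(d)}.

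The step I expect to be the main obstacle is the bookkeeping that converts a matching of polynomial exponents up to constants into the stated normal forms, in particular the emergence of the periodic polynomials $\Phi,\Psi,\chi$. When a pairing imposes, say, $p_1(z+c)\equiv p_2(z)$ with shared non-constant part $M$, writing the condition out gives $M(z+c)-M(z)=\text{const}$; absorbing the affine piece of this difference into the linear form leaves a genuinely $c$-periodic polynomial, and a polynomial $P$ with $P(z+c)=P(z)$ is constant along every line of direction $c$, hence is a polynomial in the linear forms vanishing on $c$. This is precisely the content of the presentations \eqref{e2.5}--\eqref{e2.6}, where each $s_i^k$ and $t_i^k$ is a linear combination whose coefficients annihilate $c$, so that $\Phi,\Psi,\chi$ are exactly the general $c$-periodic polynomial building blocks. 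Carrying this decomposition through both coupling identities simultaneously and verifying that the two families of constant relations are mutually consistent then delivers the precise forms of $f_1,f_2$ and of $g_1,g_2=L+R+D$, together with all the constraints listed in \textbf{(i)}--\textbf{(iii)}.
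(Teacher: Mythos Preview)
Your plan is essentially the paper's own proof. The paper factors exactly as you do, invokes Hadamard (Lemmas \ref{lem3.2}--\ref{lem3.3}) to obtain polynomials playing the role of your $p_j,q_j$ (denoted $\alpha_1,\ldots,\alpha_4$ there), derives the same two coupling identities (written as three-term sums equal to $1$ rather than four-term sums equal to $0$), and then runs a Borel-type case split via Lemma \ref{lem3.1a}. Your three pairings correspond to the paper's Case~1 (self-pairing $\Rightarrow$ conclusion (i)), Subcase~2.1 (first cross-pairing $\Rightarrow$ conclusion (ii)), and Subcase~2.4 (second cross-pairing $\Rightarrow$ conclusion (iii)), and your reading of the $c$-periodic polynomial remainder as ``polynomial in linear forms annihilating $c$'' is exactly how $\Phi,\Psi$ arise.

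The one point your outline underplays is that the two coupling identities need not, a priori, select the \emph{same} pairing; you must also exclude the mixed combinations. In the paper these are Subcases~2.2 and 2.3 (first identity uses one cross-pairing, second identity the other), together with Cases~3 and 4 (one of $p_j-q_j$ constant, the other not). Each is dispatched by showing the resulting constant-difference relations force $p_1-q_1$ (equivalently $\alpha_2-\alpha_1$) to be constant after all, contradicting the assumed non-degeneracy. Your framework handles these in the same way, but you should state explicitly that the cross-pairing chosen in the first identity propagates (via the induced constant-difference relations) to force the \emph{matching} cross-pairing in the second, and that any mismatch collapses to the self-pairing case or a contradiction.
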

\par The following examples show the existence of transcendental entire solutions with finite order of the system \eqref{e2.1}.
\begin{exm}
	Let $\xi_1=\xi_2=2$, $L(z)=z_1+2z_2$. Choose $c=(c_1,c_2)\in\mathbb{C}^2$ such that $e^{c_1+2c_2}=-9/25$ and $\Phi(z)=(c_2z_1-c_1z_2)^5$. Let $g_1(z_1,z_2)=L(z)+\Phi(z)+B_1$ and $g_2(z_1,z_2)=L(z)+\Phi(z)+B_2$, where $B_1,B_2$ are constants in $\mathbb{C}$ such that $B_1-B_2=2\pi i$. Then in view of the conclusion \textbf{(i)} of Theorem $\ref{t1}$, we can see that \beas (f_1,f_2)=\left(\frac{5}{4}e^{\frac{1}{2}[z_1+2z_2+(c_2z_1-c_1z_2)^3+B_1]},\frac{5}{4}e^{\frac{1}{2}[z_1+2z_2+(c_2z_1-c_1z_2)^3+B_2]}\right)\eeas is a solution of \eqref{e2.1}.
\end{exm}
\begin{exm}
	Let $c_1=7\pi i/5, c_2=-2\pi i/5$, $L_1(z)=z_1+z_2$, $L_2(z)=2z_1-3z_2$, $\Phi(z)=-\dfrac{\pi^2}{25}(2z_1+7z_2)^2$, $\Psi(z)\equiv0$, $B_1=3\pi i/2$, $B_2=\pi i$, $B_3=2\pi i$, $B_4=\pi i/2$ and 
	\beas f_1(z_1,z_2)=\frac{1}{2}\left[e^{z_1+z_2-\frac{\pi^2}{25}(2z_1+7z_2)^2+\frac{3\pi i}{2}}+e^{2z_1-3z_2+2\pi i}\right],\eeas
	\beas f_2(z_1,z_2)=\frac{1}{2}\left[e^{z_1+z_2-\frac{\pi^2}{25}(2z_1+7z_2)^2+\pi i}+e^{2z_1-3z_2+\frac{\pi i}{2}}\right].\eeas Then in view of the conclusion \textbf{(ii)} of Theorem $\ref{t1}$, we can see that $(f_1,f_2)$ is a solution of \eqref{e2.1} with
	$g_1(z_1,z_2)=3z_1-2z_2-\dfrac{\pi^2}{25}(2z_1+7z_2)^2+\dfrac{7\pi i}{2}$ and $g_2(z_1,z_2)=3z_1-2z_2-\dfrac{\pi^2}{25}(2z_1+7z_2)^2+\dfrac{3\pi i}{2}$
\end{exm}
\par In order to generalize Theorem D to a large extent, let us consider the following system of equations.
\bea\label{e2.3} \begin{cases} \left(\frac{\partial^k f_1(z_1,z_2)}{\partial z_1^k}\right)^2+f_2(z_1+c_1,z_2+c_2)^2=e^{g_1(z_1,z_2)},\vspace{1mm}\\	\left(\frac{\partial^k f_2(z_1,z_2)}{\partial z_1^k}\right)^2+f_1(z_1+c_1,z_2+c_2)^2=e^{g_2(z_1,z_2)},\end{cases}\eea where $e^{g_1(z_1,z_2)}$, $e^{g_2(z_1,z_2)}$ are any two polynomials in $\mathbb{C}^2$ and obtain the following result.
\begin{theo}\label{t2}
Let $c=(c_1, c_2)\in \mathbb{C}^2$ and $g_1(z_1,z_2)$, $g_1(z_1,z_2)$ be any two polynomials in $\mathbb{C}^2$. If $(f_1,f_2)$ is a pair of transcendental entire solution with finite order of simultaneous Fermat type difference equation \eqref{e2.3}, then one of the following cases must occur.
\begin{enumerate}
\item[\textbf{(i)}] $g_1(z_1,z_2)=L(z)+H(s)+B_1$, $g_2(z_1,z_2)=L(z)+H(s)+B_2$, where $L(z)=a_1z_1+a_2z_2$, $H(s)$ is a polynomial in $s:=d_2z_2$ with $d_2c_2=0$, $a_1,a_2,B_1,B_2$ constants in $\mathbb{C}$, and \beas f_1(z_1,z_2)=\frac{\xi_2^2-1}{2i\xi_2}e^{\frac{1}{2}[L(z)+H(s)+B_2-L(c)]},\; f_2(z)=\frac{\xi_1^2-1}{2i\xi_1}e^{\frac{1}{2}[L(z)+H(s)+B_1-L(c)]},\eeas where $\xi_j$ is a non-zero complex number in $\mathbb{C}$ such that $\xi_j^4\neq0,1$, $j=1,2$, and \beas e^{L(c)}=-\left(\frac{a_1}{2}\right)^k\frac{(\xi_1^2-1)(\xi_2^2-1)}{(\xi_1^2+1)(\xi_2^2+1)}\;\;\text{and}\;\; e^{B_1-B_2}=\frac{\xi_1^2(\xi_2^4-1)}{\xi_2^2(\xi_1^4-1)}.\eeas \vspace{1.2mm}
\item[\textbf{(ii)}] $g_1(z_1,z_2)=L_1(z)+L_2(z)+H_1(s)+H_2(s)+B_1+B_3$, $g_2(z_1,z_2)=L_1(z)+L_2(z)+H_1(s)+H_2(s)+B_2+B_4$, where $L_1(z)+H_1(s)\neq L_2(z)+H_2(s)$, $L_j(z)=a_{j1}z_1+a_{j2}z_2$, $H_j(s)$ is a polynomial in $s:=d_2z_2$ with $d_2c_2=0$, $a_{11},a_{12}, a_{21},a_{22}, B_1,B_2,B_3,B_4\in\mathbb{C}$, and \beas f_1(z_1,z_2)=\frac{1}{2i}\left[e^{L_1(z)+H_1(s)-L_1(c)+B_2}-e^{L_2(z)+H_2(s)-L_2(c)+B_4}\right],\eeas \beas f_2(z_1,z_2)=\frac{1}{2i}\left[e^{L_1(z)+H_1(s)-L_1(c)+B_1}-e^{L_2(z)+H_2(s)-L_2(c)+B_3}\right],\eeas where $L_1(z),L_2(z)$, $B_1,B_2,B_3,B_4$ satisfies one of the following relations\vspace{1.2mm} \begin{enumerate}
\item [\textbf{(a)}] $e^{L_1(c)}=ia_{11}^k$, $e^{L_2(c)}=ia_{21}^k$, $e^{B_1-B_2}=-1$, $e^{B_3-B_4}=1$.\vspace{1.2mm}
\item [\textbf{(b)}] $e^{L_1(c)}=ia_{11}^k$, $e^{L_2(c)}=-ia_{21}^k$, $e^{B_1-B_2}=-1$, $e^{B_3-B_4}=-1$.\vspace{1.2mm}
\item [\textbf{(c)}] $e^{L_1(c)}=-ia_{11}^k$, $e^{L_2(c)}=ia_{21}^k$, $e^{B_1-B_2}=1$, $e^{B_3-B_4}=1$.\vspace{1.2mm}
\item [\textbf{(d)}] $e^{L_1(c)}=-ia_{11}^k$, $e^{L_2(c)}=-ia_{21}^k$, $e^{B_1-B_2}=1$, $e^{B_3-B_4}=-1$.\vspace{1.2mm}
\end{enumerate}
\item[\textbf{(iii)}] $g_1(z_1,z_2)=L_1(z)+L_2(z)+H_1(s)+H_2(s)+B_1+B_3$, $g_2(z_1,z_2)=L_1(z)+L_2(z)+H_1(s)+H_2(s)+B_2+B_4$, where $L_1(z)+H_1(s)\neq L_2(z)+H_2(s)$, $L_j(z)=a_{j1}z_1+a_{j2}z_2$, $H_j(s)$ is a polynomial in $s:=d_2z_2$ with $d_2c_2=0$, $a_{11},a_{12}, a_{21},a_{22}, B_1,B_2,B_3,B_4\in\mathbb{C}$, and \beas f_1(z_1,z_2)=-\frac{1}{2i}\left[e^{L_1(z)+H_1(s)-L_1(c)+B_2}-e^{L_2(z)+H_2(s)-L_2(c)+B_4}\right],\eeas \beas f_2(z_1,z_2)=\frac{1}{2i}\left[e^{L_1(z)+H_1(s)-L_1(c)+B_1}-e^{L_2(z)+H_2(s)-L_2(c)+B_3}\right],\eeas where $L_1(z),L_2(z)$, $B_1,B_2,B_3,B_4$ satisfies one of the following relations\vspace{1.2mm} \begin{enumerate}
\item [\textbf{(a)}] $e^{L_1(c)}=a_{11}^k$, $e^{L_2(c)}=a_{21}^k$, $e^{B_1-B_2}=i$, $e^{B_3-B_4}=-i$.\vspace{1.2mm}
\item [\textbf{(b)}] $e^{L_1(c)}=a_{11}^k$, $e^{L_2(c)}=-a_{21}^k$, $e^{B_1-B_2}=i$, $e^{B_3-B_4}=i$.\vspace{1.2mm}
\item [\textbf{(c)}] $e^{L_1(c)}=-a_{11}^k$, $e^{L_2(c)}=a_{21}^k$, $e^{B_1-B_2}=-i$, $e^{B_3-B_4}=-i$.\vspace{1.2mm}
\item [\textbf{(d)}] $e^{L_1(c)}=-a_{11}^k$, $e^{L_2(c)}=-a_{21}^k$, $e^{B_1-B_2}=-i$, $e^{B_3-B_4}=i$.\vspace{1.2mm}
\end{enumerate}\end{enumerate}\end{theo}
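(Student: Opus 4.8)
The plan is to linearize each Fermat-type equation by factoring its left-hand side over $\mathbb{C}$ and then exploiting that a zero-free entire function of finite order is the exponential of a polynomial. Writing the two factors of the first equation as $\dfrac{\partial^k f_1}{\partial z_1^k}\pm i\,f_2(z+c)$, their product equals $e^{g_1}$ and therefore never vanishes; since $f_1,f_2$ are entire of finite order, both factors are entire of finite order and zero-free, so there are polynomials $p_1,q_1$ with
\[
\frac{\partial^k f_1}{\partial z_1^k}+i\,f_2(z+c)=e^{p_1},\qquad \frac{\partial^k f_1}{\partial z_1^k}-i\,f_2(z+c)=e^{q_1},\qquad p_1+q_1=g_1,
\]
and symmetrically polynomials $p_2,q_2$ with $p_2+q_2=g_2$ for the second equation. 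Solving the two $2\times 2$ linear systems gives
\[
\frac{\partial^k f_1}{\partial z_1^k}=\tfrac12(e^{p_1}+e^{q_1}),\quad f_2(z+c)=\tfrac{1}{2i}(e^{p_1}-e^{q_1}),\quad \frac{\partial^k f_2}{\partial z_1^k}=\tfrac12(e^{p_2}+e^{q_2}),\quad f_1(z+c)=\tfrac{1}{2i}(e^{p_2}-e^{q_2}).
\]

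Next I would shift the last two identities by $-c$ to obtain $f_1(z)=\tfrac{1}{2i}\bigl(e^{p_2(z-c)}-e^{q_2(z-c)}\bigr)$ and $f_2(z)=\tfrac{1}{2i}\bigl(e^{p_1(z-c)}-e^{q_1(z-c)}\bigr)$, then differentiate $k$ times in $z_1$ and equate the outcome to the first and third identities above. Using $\partial_{z_1}^k e^{h}=R_k[h]\,e^{h}$, where $R_k[h]=(\partial_{z_1}h)^k+(\text{lower-order }z_1\text{-derivative terms})$, this yields
\[
\tfrac12(e^{p_1}+e^{q_1})=\tfrac{1}{2i}\bigl(R_k[p_2(z-c)]\,e^{p_2(z-c)}-R_k[q_2(z-c)]\,e^{q_2(z-c)}\bigr),
\]
together with its $1\leftrightarrow2$ analogue, in which every exponent is a polynomial. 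The heart of the argument is then the comparison of exponents: by the Borel-type linear-independence principle for exponentials of distinct polynomials (equivalently the second main theorem of Nevanlinna theory in several variables, as in \cite{Cao & Korhonen & 2016,Korhonen & CMFT & 2012}), the exponents on the two sides must coincide in matching pairs and the matched coefficients must agree. Since the left-hand coefficients are the constants $\tfrac12$ while the right-hand coefficients are the polynomials $R_k[\cdot]$, matching degrees forces each relevant $R_k$ to collapse to a nonzero constant; this occurs precisely when the exponent is affine in $z_1$, with $z_1$-coefficient $a_1$, giving $R_k=a_1^k$. Consequently each of $p_1,q_1,p_2,q_2$ splits as a linear form $L(z)=a_1z_1+a_2z_2$ plus a part invariant under $z\mapsto z+c$, i.e. a polynomial $H(s)$ in $s=d_2z_2$ with $d_2c_2=0$, which is exactly the decomposition recorded in the statement and the origin of the $a_{j1}^k$ factors in the relations (a)--(d).

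Finally I would perform the bookkeeping of the matched exponents and coefficients. Pairing $\{p_1,q_1\}$ with $\{p_2(z-c),q_2(z-c)\}$ and using the symmetric pairing leaves two possibilities. Either the two exponents in the representation of each $f_j$ differ only by an additive constant, whence $f_1,f_2$ collapse to single exponentials and the coefficient identities produce case \textbf{(i)} with its parameters $\xi_j$ and the constraints $e^{L(c)}=-(a_1/2)^k(\xi_1^2-1)(\xi_2^2-1)/\bigl((\xi_1^2+1)(\xi_2^2+1)\bigr)$ and $e^{B_1-B_2}=\xi_1^2(\xi_2^4-1)/\bigl(\xi_2^2(\xi_1^4-1)\bigr)$; or the two exponents are genuinely distinct, giving the two-term representations, where the two admissible global pairings together with the sign/phase choices in equating the $e^{\pm i\theta}$ factors yield cases \textbf{(ii)} and \textbf{(iii)} with their four relations each. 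I expect the main obstacle to be this final matching: controlling the combinatorics of which exponent pairs with which once both the operator $R_k$ and the shift by $-c$ are in play, and checking that the degenerate sub-cases (for instance $\xi_j^4\in\{0,1\}$, or coincident linear parts $L_1=L_2$) are either excluded or consistently absorbed into the listed cases.
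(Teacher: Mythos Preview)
Your outline is essentially the paper's proof: factor each equation, write the zero-free factors as exponentials of polynomials (the paper calls them $\alpha_1,\alpha_2,\alpha_3,\alpha_4$ in place of your $p_1,q_1,p_2,q_2$), differentiate and shift, then compare exponential sums via a Borel-type lemma. The case structure you anticipate, governed by whether $q_1-p_1$ and $q_2-p_2$ are constant, is exactly the paper's.

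On the obstacle you flag, three points in the paper's argument are missing from your sketch and you should make them explicit. First, the \emph{mixed} cases (exactly one of $q_1-p_1$, $q_2-p_2$ constant) are not disposed of by Borel matching; the paper reduces each to a two-term exponential identity and applies the second fundamental theorem in $\mathbb{C}^2$ to get $T(r,\cdot)\le S(r,\cdot)$, forcing the allegedly non-constant difference to be constant after all. Second, in the fully non-constant case one must verify that none of the $R_k$-coefficients vanishes identically before invoking the three-term Borel lemma; the paper checks this separately (if one $R_k\equiv 0$ the resulting three-term exponential relation is killed by their Lemma~3.7). Third, of the four possible pairings between $\{p_1,q_1\}$ and $\{p_2(\cdot-c),q_2(\cdot-c)\}$ across the two equations, two ``crossed'' pairings force $p_1(z+4c)-p_1(z)=q_1(z+4c)-q_1(z)$, hence $q_1-p_1$ constant, contradicting the case hypothesis; only the two ``parallel'' pairings survive and give conclusions (ii) and (iii). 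Your phrase ``two admissible global pairings'' presumes this elimination without carrying it out.
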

\par The following examples show the existence of transcendental entire solutions with finite order of the system \eqref{e2.3}.\vspace{1.2mm}
\begin{exm}
Let $\xi_1=\xi_2=2$, $L(z)=2z_1+z_2$, $g_1(z_1,z_2)=g_2(z_1,z_2)=L(z)+\pi i$. Choose $c=(c_1,c_2)\in\mathbb{C}^2$ such that $e^{L(c)}=-9/25$. Then in view of the conclusion $(i)$ of Theorem {\em\ref{t2}}, it follows that $(f_1(z),f_2(z))$ is a solution of \beas \begin{cases} \left(\frac{\partial f_1(z_1,z_2)}{\partial z_1}\right)^2+f_2(z_1+c_1,z_2+c_2)^2=e^{g_1(z_1,z_2)},\vspace{1mm}\\	\left(\frac{\partial f_2(z_1,z_2)}{\partial z_1}\right)^2+f_1(z_1+c_1,z_2+c_2)^2=e^{g_2(z_1,z_2)},\end{cases},\eeas where \beas f_1(z)=f_2(z)= -\frac{5i}{4}e^{\frac{1}{2}[2z_1+z_2]}.\eeas	
\end{exm}
\begin{exm}
Let $k=2$, $L_1(z)=z_1+z_2$, $L_2(z)=2z_1+z_2$, $(c_1,c_2)=(2\log2,\pi i/2-2\log2)$, $B_1=2\pi i$, $B_2=\pi i$, $B_3=B_4=\pi i$. Then in view of the conclusion $(ii)(a)$ of Theorem {\em\ref{t2}}, we can easily see that \beas(f_1(z),f_2(z))=\left(\frac{1}{8}\left(4e^{z_1+z_2}-e^{2z_1+z_2}\right),-\frac{1}{8}\left(4e^{z_1+z_2}+e^{2z_1+z_2}\right)\right)\eeas is a solution of \beas \begin{cases} \left(\frac{\partial^2 f_1(z_1,z_2)}{\partial z_1^2}\right)^2+f_2(z_1+c_1,z_2+c_2)^2=e^{g_1(z_1,z_2)},\vspace{1mm}\\	\left(\frac{\partial^2 f_2(z_1,z_2)}{\partial z_1^2}\right)^2+f_1(z_1+c_1,z_2+c_2)^2=e^{g_2(z_1,z_2)},\end{cases},\eeas where $g_1(z_1,z_2)=3z_1+2z_2+3\pi i$ and $g_1(z_1,z_2)=3z_1+2z_2+2\pi i$	
\end{exm}
\begin{exm}
Let $k=1$, $L_1(z)=2z_1+3z_2$, $L_2(z)=2z_1-z_2$, $(c_1,c_2)=(\frac{1}{2}\log2,0)$, $B_1=\pi i$, $B_2=\pi i/2$, $B_3=\pi i/2$, $B_4=\pi i$. Then in view of the conclusion $(iii)(a)$ of Theorem {\em\ref{t2}}, we can easily see that \beas(f_1(z),f_2(z))=\left(-\frac{1}{4i}\left(ie^{2z_1+3z_2+z_2^3}+e^{2z_1-z_2+z_2^5}\right),-\frac{1}{4i}\left(e^{2z_1+3z_2+z_2^3}+ie^{2z_1-z_2+z_2^5}\right)\right)\eeas is a solution of \beas \begin{cases} \left(\frac{\partial f_1(z_1,z_2)}{\partial z_1}\right)^2+f_2(z_1+c_1,z_2+c_2)^2=e^{g_1(z_1,z_2)},\vspace{1mm}\\	\left(\frac{\partial f_2(z_1,z_2)}{\partial z_1}\right)^2+f_1(z_1+c_1,z_2+c_2)^2=e^{g_2(z_1,z_2)},\end{cases},\eeas where $g_1(z_1,z_2)=g_1(z_1,z_2)=4z_1+2z_2+z_2^3+z_2^5+3\pi i/2$.\end{exm}
\par Beside these, we are interested to characterize the entire solutions of the following system of $k$-th partial differential difference equations 
\bea\label{e2.4} \begin{cases} \left(\frac{\partial^k f_1(z_1,z_2)}{\partial z_1^k}\right)^2+(f_2(z_1+c_1,z_2+c_2)-f_2(z_1,z_2))^2=1,\vspace{1mm}\\\left(\frac{\partial^k f_2(z_1,z_2)}{\partial z_1^k}\right)^2+(f_1(z_1+c_1,z_2+c_2)-f_1(z_1,z_2))^2=1,\end{cases}\eea which has not been discussed earlier in the literature. Regarding this, we obtain the following result.
\begin{theo}\label{t3}
Let $c$ be a non-zero constant in $\mathbb{C}^2$, $k$ be a positive integer and $f_1(z), f_2(z)$ are transcendental entire functions of finite order not of $c$-periodic. If $(f_1(z),f_2(z))$ be a solution with finite order of \eqref{e2.4}, then one of the following conclusions must hold.\vspace{1.2mm}
\begin{enumerate}\item [\textbf{(i)}] \beas f_1(z_1,z_2)=\frac{e^{L(z)+H(z_2)+\beta}-e^{-(L(z)+H(z_2)+\beta)}}{2\alpha_1^k},\eeas\beas f_2(z_1,z_2)=\frac{e^{L(z)+H(z_2)+\beta+\eta}-e^{-(L(z)+H(z_2)+\beta+\eta)}}{2\alpha_1^k}.\eeas  where $L(z)=\alpha_1z_1+\alpha_2z_2$, $H(z_2)$ is a polynomial in $z_2$, only, $k$ is an odd positive integer, $\alpha_1(\neq0),\alpha_2, \beta,\eta\in\mathbb{C}$ and satisfy the following relations:\vspace{1.2mm} \beas e^{L(c)}=\frac{ie^{\eta}}{\alpha_1^k+ie^{\eta}},\;\; e^{2\eta}=1\;\;\text{and}\;\; \alpha_1^{k}=-2ie^{-\eta}.\eeas 
\item [\textbf{(ii)}] \beas f_1(z_1,z_2)=\frac{e^{L(z)+H(z_2)+\beta}+e^{-(L(z)+H(z_2)+\beta)}}{2\alpha_1^k},\eeas\beas f_2(z_1,z_2)=\frac{e^{-(L(z)+H(z_2)+\beta)+\eta}+e^{L(z)+H(z_2)+\beta-\eta}}{2\alpha_1^k}.\eeas where $L(z)=\alpha_1z_1+\alpha_2z_2$, $H(z_2)$ is a polynomial in $z_2$, only $k$ is an even positive integer, $\alpha_1(\neq0),\alpha_2, \beta,\eta\in\mathbb{C}$ and satisfy the following relations:\vspace{1.2mm}\beas e^{L(c)}=1+ie^{-\eta}\alpha_1^k,\;\; e^{2\eta}=-1\;\;\text{and}\;\; \alpha_1^{k}=2ie^{\eta}.\eeas\end{enumerate}\end{theo}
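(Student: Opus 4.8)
My plan is to linearise the problem by factoring each Fermat-type equation and then to determine the exponents via the linear independence of exponential polynomials. Write $F_j := \partial_{z_1}^k f_j$ and $\Delta f_j := f_j(z_1+c_1,z_2+c_2)-f_j(z_1,z_2)$. The first equation of \eqref{e2.4} reads $F_1^2+(\Delta f_2)^2=1$, which factors as $(F_1+i\Delta f_2)(F_1-i\Delta f_2)=1$. Since $f_1,f_2$ are entire of finite order, so are $F_1$ and $\Delta f_2$, whence both factors are zero-free entire functions of finite order; by Hadamard factorisation each equals the exponential of a polynomial. Thus there is a polynomial $p_1$ with $F_1+i\Delta f_2=e^{p_1}$ and $F_1-i\Delta f_2=e^{-p_1}$, and likewise a polynomial $p_2$ from the second equation, giving the four identities $\partial_{z_1}^k f_1=\tfrac12(e^{p_1}+e^{-p_1})$, $\Delta f_2=\tfrac1{2i}(e^{p_1}-e^{-p_1})$, $\partial_{z_1}^k f_2=\tfrac12(e^{p_2}+e^{-p_2})$ and $\Delta f_1=\tfrac1{2i}(e^{p_2}-e^{-p_2})$. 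The hypothesis that $f_1,f_2$ are not $c$-periodic forces $\Delta f_1,\Delta f_2\not\equiv0$, so $e^{p_j}\not\equiv e^{-p_j}$ and the degenerate branches do not occur.

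Next I would produce a relation involving only the exponentials. Applying $\partial_{z_1}^k$ to $\Delta f_1=\tfrac1{2i}(e^{p_2}-e^{-p_2})$ and using $\partial_{z_1}^k f_1=\tfrac12(e^{p_1}+e^{-p_1})$ yields
\[
\tfrac12\bigl(e^{p_1(z+c)}+e^{-p_1(z+c)}-e^{p_1(z)}-e^{-p_1(z)}\bigr)=\tfrac1{2i}\bigl(Q_k^{+}e^{p_2}-Q_k^{-}e^{-p_2}\bigr),
\]
where, by the Fa\`a di Bruno formula, $\partial_{z_1}^k e^{\pm p_2}=e^{\pm p_2}Q_k^{\pm}$ with $Q_k^{\pm}$ a polynomial whose dominant part is $(\pm\partial_{z_1}p_2)^k$. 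Both sides are exponential polynomials, so the crux is to apply a Borel-type lemma on the linear independence of terms $P(z)e^{h(z)}$ with distinct polynomial exponents $h$. Matching exponents then pins $p_2(z\pm c)$ against $p_1$ and, crucially, forces the coefficient polynomials $Q_k^{\pm}$ to be constants; hence $\partial_{z_1}p_2$ is constant and $p_2$ is linear in $z_1$. By symmetry $p_1$ is linear in $z_1$ as well, so $p_j(z)=\alpha_1 z_1+(\text{a polynomial in }z_2)$.

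With $p_j$ linear in $z_1$ we have $\partial_{z_1}^k e^{\pm p_j}=(\pm\alpha_1)^k e^{\pm p_j}$, and the factor $(\pm\alpha_1)^k$ separates the two branches of the theorem according to the parity of $k$: for odd $k$ the identities assemble into the antisymmetric (sinh-type) solution of conclusion \textbf{(i)}, and for even $k$ into the symmetric (cosh-type) solution of conclusion \textbf{(ii)}. Finally I would recover $f_1,f_2$ by integrating $\partial_{z_1}^k f_j=\tfrac12(e^{p_j}+e^{-p_j})$ in $z_1$, the exponentials being eigenfunctions of $\partial_{z_1}$, and then substitute the resulting forms into the difference relations $\Delta f_2=\tfrac1{2i}(e^{p_1}-e^{-p_1})$ and $\Delta f_1=\tfrac1{2i}(e^{p_2}-e^{-p_2})$. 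Comparing the coefficients of the independent exponentials both fixes the admissible shape of the $z_2$-polynomial $H(z_2)$ and yields the stated scalar constraints, namely the values of $e^{L(c)}$, $e^{2\eta}=\pm1$ and $\alpha_1^{k}$ in each case.

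The step I expect to be the main obstacle is the linear-independence analysis of the displayed identity: one must control the Fa\`a di Bruno coefficients $Q_k^{\pm}$ generated by the $k$-th derivative and show they collapse to constants (forcing $z_1$-linearity of $p_1,p_2$) rather than merely constraining the exponents, and it is precisely the non-$c$-periodicity hypothesis that excludes the spurious matchings. The parity bookkeeping that distributes the solutions into conclusions \textbf{(i)} and \textbf{(ii)} is then routine but must be carried out carefully to obtain the exact constant relations.
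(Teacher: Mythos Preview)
Your opening is exactly right and matches the paper: factor each equation, produce polynomials $p_1,p_2$ with the four identities, differentiate the difference relation $k$ times, and compare with the cosh-type expression for $\partial_{z_1}^k f_1$. The gap is in what you do with the resulting exponential identity.

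You write that ``matching exponents \ldots\ forces the coefficient polynomials $Q_k^{\pm}$ to be constants'' and that afterwards ``the factor $(\pm\alpha_1)^k$ separates the two branches of the theorem according to the parity of $k$.'' This inverts the actual logic. The Borel/Hu--Li--Yang lemma does not directly force $Q_k^{\pm}$ to be constant; it tells you that not all six exponents $\pm p_1(z+c),\pm p_1(z),\pm p_2(z)$ can be pairwise non-constant-different, since the coefficients $\pm 1$ do not vanish. One must therefore run a case analysis on \emph{which} exponents coincide modulo constants. The paper splits on whether $p_2-p_1$ is constant (Case~1), and if not, whether $p_2+p_1$ is constant (Subcase~2.1); the remaining possibilities (both $p_2\pm p_1$ non-constant, with various matchings of $p_1(z+c)$ against $\pm p_2$) are eliminated one by one. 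Conclusion~\textbf{(i)} comes from Case~1 and conclusion~\textbf{(ii)} from Subcase~2.1; within each case the scalar relations then \emph{force} $k$ to have the stated parity (odd in Case~1, even in Subcase~2.1), not the other way around. Your sketch does not carry out this case split, and without it the argument does not close: you cannot get $p_2=p_1+\eta$ versus $p_2=-p_1+\eta$ (which is precisely what distinguishes the sinh-type and cosh-type forms of $f_2$) from parity alone.

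A secondary issue: recovering $f_j$ by integrating $\partial_{z_1}^k f_j$ introduces a degree-$(k-1)$ polynomial in $z_1$ with $z_2$-dependent coefficients that you would then have to kill. The paper avoids this entirely by reading $f_1$ and $f_2$ off the difference relations $\Delta f_1=\tfrac{1}{2i}(e^{p_2}-e^{-p_2})$ and $\Delta f_2=\tfrac{1}{2i}(e^{p_1}-e^{-p_1})$ once $p_1,p_2$ are known, which gives the closed forms directly and is what you should do as well.
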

\par The following examples show that transcendental entire solutions of \eqref{e2.4} are precise.\vspace{1.2mm}
\begin{exm}Let $k=1$, $\eta=4\pi i$ and $\alpha_1=-2i$. Choose $c=(c_1,c_2)\in\mathbb{C}^2$ such that $-2ic_1+c_2=(2m+\pi i)$, $m$ being an integer. Then, it can be easily verified that \beas (f_1(z),f_2(z))=\left(\frac{e^{2iz_1-z_2-z_2^{10}}-e^{-2iz_1+z_2+z_2^{10}}}{4i},\frac{e^{2iz_1-z_2-z_2^{10}}-e^{-2iz_1+z_2+z_2^{10}}}{4i}\right)\eeas is a solution of \beas \begin{cases}\left(\frac{\partial f_1}{\partial z_1}\right)^2+[f_2(z+c)-f_2(z)]^2=1,\\\left(\frac{\partial f_2}{\partial z_1}\right)^2+[f_1(z+c)-f_1(z)]^2=1.\end{cases}\eeas\end{exm}
\begin{exm}	Let $k=2$, $\eta=5\pi i/2$. Choose $\alpha_1\in\mathbb{C}$ such that $\alpha_1^2=-2$ and $c=(c_1,c_2)\in\mathbb{C}^2$ such that $\alpha_1c_1+c_2+1=(2m+1)\pi i$, $m\in\mathbb{Z}$. Then one can see that \beas (f_1(z),f_2(z))=\left(\frac{e^{\alpha_1z_1+z_2+1}+e^{(-\alpha_1z_1+z_2+1)}}{-4},\frac{e^{\alpha_1z_1+z_2+1}-e^{-(\alpha_1z_1+z_2+1)}}{-4i}\right)\eeas is a solution of \beas\begin{cases}\left(\frac{\partial^2 f_1}{\partial z_1^2}\right)^2+[f_2(z+c)-f_2(z)]^2=1,\vspace{.5mm}\\\left(\frac{\partial^2 f_2}{\partial z_1^2}\right)^2+[f_1(z+c)-f_1(z)]^2=1.\end{cases}\eeas\end{exm}
\section{Proof of the main results}
\par Before we start the proof of the main results, we present some important lemmas which will play key role to prove the main results of the paper.
\begin{lem}\label{lem3.1}{\em\cite{Hu & Li & Yang & 2003}}	Let $f_j\not\equiv0$ $(j=1,2\ldots,m;\; m\geq 3)$ be meromorphic functions on $\mathbb{C}^{n}$ such that $f_1,\ldots, f_{m-1}$ are not constants, $f_1+f_2+\cdots+ f_m=1$ and such that\beas \sum_{j=1}^{m}\left\{N_{n-1}\left(r,\frac{1}{f_j}\right)+(m-1)\ol N(r,f_j)\right\}< \lambda T(r,f_j)+O(\log^{+}T(r,f_j))\eeas holds for $j=1,\ldots, m-1$ and all $r$ outside possibly a set with finite logarithmic measure, where $\lambda < 1$ is a positive number. Then $f_m = 1$.	\end{lem}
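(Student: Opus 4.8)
My plan is to argue by contradiction via a Borel-type argument resting on the second main theorem for holomorphic curves in $\mathbb{C}^n$, with the hypothesis $\lambda<1$ furnishing the final contradiction. Assume $f_m\not\equiv 1$, equivalently $\sum_{j=1}^{m-1}f_j\not\equiv 0$. Writing $f_j=h_j/h_0$ with $h_0,\dots,h_m$ entire and without common zeros, the relation $f_1+\cdots+f_m=1$ becomes $h_1+\cdots+h_m=h_0$, and I associate to it the holomorphic map $f=[h_1:\cdots:h_m]\colon\mathbb{C}^n\to\mathbb{P}^{m-1}$. The aim is to bound each $T(r,f_j)$ with $1\le j\le m-1$ from above by exactly the counting quantity on the left-hand side of the hypothesis, after which $\lambda<1$ forces these nonconstant functions to have bounded growth, which is impossible.

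First I would reduce to the case in which $f$ is linearly nondegenerate: a nontrivial $\mathbb{C}$-linear relation among $f_1,\dots,f_m$ lets me eliminate one function and lower $m$, so that induction on $m$ applies (the hypotheses pass to the shortened sum, and the case $m=3$ is handled directly). For nondegenerate $f$ I would invoke the second main theorem against the $m+1$ hyperplanes in general position given by the coordinate hyperplanes $H_j=\{x_j=0\}$, $1\le j\le m$, together with $H_0=\{x_1+\cdots+x_m=0\}$. The map $f$ meets $H_j$ at the zeros of $f_j$ and meets $H_0$, where $\sum_j h_j=h_0=0$, precisely at the poles of the $f_j$; since $q-p-1=(m+1)-(m-1)-1=1$, the inequality collapses to the per-function estimate $T(r,f_j)\le\sum_{k=1}^{m}\{N_{n-1}(r,1/f_k)+(m-1)\ol N(r,f_k)\}+S(r,f_j)$ for each $j$ with $1\le j\le m-1$, where $S(r,f_j)=O(\log^+T(r,f_j)+\log r)$ and the truncation level of the zero-counting together with the multiplicity weight $m-1$ on the pole-counting are dictated by the divisor of the generalized Wronskian attached to $f$.

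The several-variable logarithmic derivative lemma is what guarantees that the proximity contribution of that Wronskian is absorbed into $S(r,f_j)$ outside a set of finite logarithmic measure. Feeding the displayed estimate into the hypothesis, whose right-hand side is smaller than $\lambda T(r,f_j)$, gives $(1-\lambda)\,T(r,f_j)\le S(r,f_j)=O(\log^+T(r,f_j)+\log r)$. Because $\lambda<1$ and each such $f_j$ is a nonconstant finite-order function, $T(r,f_j)$ grows faster than $\log r$ and faster than $\log^+T(r,f_j)$, so this inequality is untenable. Hence the nondegenerate situation cannot arise, and tracing the forced $\mathbb{C}$-linear dependence back through the inductive reduction leaves only the possibility $\sum_{j=1}^{m-1}f_j\equiv0$, that is, $f_m=1$.

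The step I expect to be the principal obstacle is the exact bookkeeping inside the $\mathbb{C}^n$ second main theorem: building a generalized Wronskian of $f_1,\dots,f_m$ whose determinant is genuinely nonvanishing (in several variables one cannot simply iterate a single derivative and must select an admissible family of partial differential monomials), and then checking that its zero and pole divisor yields \emph{exactly} the truncated term $N_{n-1}(r,1/f_k)$ for zeros and the weighted reduced term $(m-1)\ol N(r,f_k)$ for poles rather than any coarser analogue. A second delicate point is extracting the per-function inequality with the correct constant $1$ in front of $T(r,f_j)$ — rather than the lossy bound that a crude comparison with the characteristic of the projective map would give — and ensuring the finitely many exceptional sets of finite logarithmic measure combine uniformly over $j$.
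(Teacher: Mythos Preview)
The paper does not prove Lemma~\ref{lem3.1} at all; it is quoted as a tool from the monograph of Hu--Li--Yang \cite{Hu & Li & Yang & 2003} and used as a black box in the proof of Theorem~\ref{t3}. There is therefore no proof in the paper to compare your proposal against.

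That said, your outline is the standard route to such Borel-type results and is essentially the argument in the cited source: reduce by induction to the linearly nondegenerate case, apply Cartan's second main theorem for the holomorphic curve $[h_1:\cdots:h_m]$ against the $m$ coordinate hyperplanes together with $\{x_1+\cdots+x_m=0\}$, and use $q-(m-1)-1=1$ to obtain a characteristic bound by the truncated counting sum. The concerns you flag about the generalized Wronskian and the precise emergence of the terms $N_{m-1}(r,1/f_k)$ and $(m-1)\ol N(r,f_k)$ are exactly the technical content of that chapter. (Note incidentally that the truncation level in the statement should read $N_{m-1}$, not $N_{n-1}$; compare the $m=3$ version in Lemma~\ref{lem3.1a}, where it is $N_2$.)

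One genuine gap in your write-up: your closing contradiction does not go through as stated. From $(1-\lambda)T(r,f_j)\le O(\log^+T(r,f_j)+\log r)$ you can only conclude $T(r,f_j)=O(\log r)$, i.e.\ that $f_j$ is rational; a nonconstant rational function satisfies this perfectly well, so ``nonconstant'' alone is not enough. The clean fix is to run the inequality at the level of the Cartan characteristic $T_f(r)$ of the projective map rather than the individual $T(r,f_j)$: one has $T(r,f_j)\le T_f(r)+O(1)$, the error term in the second main theorem is $o(T_f(r))$, and the hypothesis applied at an index $l\le m-1$ with $T(r,f_l)$ maximal then yields $T_f(r)\le \lambda\,T_f(r)+o(T_f(r))$, which is the genuine contradiction with nondegeneracy.
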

\begin{lem}\label{lem3.2}{\em\cite{Lelong & 1968, ronkin & AMS & 1971, Stoll & AMS & 1974}}
	For an entire function $F$ on $\mathbb{C}^n$, $F(0)\not\equiv 0$ and put $\rho(n_F)=\rho<\infty$. Then there exist a canonical function $f_F$ and a function $g_F\in\mathbb{C}^n$ such that $F(z)=f_F (z)e^{g_F(z)}$. For the special case $n=1$, $f_F$ is the canonical product of Weierstrass.
\end{lem}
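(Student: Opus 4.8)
The plan is to establish the Hadamard--Weierstrass factorization in the several--variable setting by first constructing a canonical product attached to the zero divisor of $F$ and then showing that the resulting quotient is a zero--free exponential. Since $F(0)\neq 0$, the zero set $Z_F=\{z\in\mathbb{C}^n:F(z)=0\}$ is an analytic hypersurface avoiding the origin; counted with multiplicities it determines a positive divisor $n_F$, whose associated counting function has finite order $\rho$ by hypothesis. The first step is to record the growth estimates for $n_F$ furnished by this finite--order assumption, since these are exactly what will control the convergence of the canonical product in the next step.

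The second and central step is to construct the canonical function $f_F$. Following the theory of Lelong, Ronkin and Stoll, one associates to the divisor the Lelong--Poincar\'e current $\tfrac{i}{\pi}\partial\bar\partial\log|F|=[Z_F]$ and builds a canonical potential by integrating a Weierstrass--type kernel against the measure carried by $Z_F$, subtracting the polynomial corrections of degree up to the genus $p\le\rho$ needed for convergence. Solvability of the second Cousin problem on $\mathbb{C}^n$ (which holds because $\mathbb{C}^n$ is Stein with $H^2(\mathbb{C}^n,\mathbb{Z})=0$) then upgrades this potential to a genuine entire function $f_F$ whose zero divisor coincides exactly with $n_F$, and the canonical choice of correcting polynomials guarantees that $f_F$ has finite order equal to $\rho$. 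In the special case $n=1$ the zero set is discrete, the kernel reduces to the classical elementary factors $E_p$, and $f_F$ becomes precisely the Weierstrass canonical product $\prod_k E_p(z/z_k)$, recovering the last assertion of the lemma.

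With $f_F$ in hand, the third step forms the quotient $h:=F/f_F$. Because $F$ and $f_F$ have \emph{identical} zero divisors (the same hypersurface with the same multiplicities), $h$ extends across $Z_F$ to a holomorphic function on all of $\mathbb{C}^n$ that vanishes nowhere. The final step produces the exponent: since $\mathbb{C}^n$ is simply connected, the closed holomorphic one--form $dh/h$ is exact, so it admits a global primitive $g_F$, an entire function with $e^{g_F}=h$. Combining the two factorizations yields $F=f_F\,e^{g_F}$, as required.

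I expect the construction of $f_F$ in the second step to be the main obstacle. In one variable the canonical product is literally a product over the discrete zeros, and Hadamard's theorem controls its order directly; in $\mathbb{C}^n$ with $n\ge 2$ the zeros form a hypersurface rather than a discrete set, so one cannot merely multiply elementary factors. Producing a \emph{global} entire function with a prescribed divisor requires the solvability of Cousin II on $\mathbb{C}^n$, and producing one of \emph{minimal} (hence finite, order--$\rho$) growth requires the delicate potential--theoretic estimates of the Lelong--Ronkin--Stoll theory; together these constitute the technical heart of the statement, the remaining steps---zero--freeness of the quotient and existence of a holomorphic logarithm---being formal consequences of the Stein and simply connected geometry of $\mathbb{C}^n$.
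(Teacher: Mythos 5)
The paper gives no proof of this lemma: it is stated as a known result and cited to Lelong, Ronkin and Stoll, so there is no internal argument to compare against. Your sketch is a correct reconstruction of precisely the argument in those sources --- finite-order growth of the counting function of the zero divisor, Lelong's canonical potential built from a Weierstrass-type kernel and promoted to an entire $f_F$ with divisor $n_F$ via the second Cousin problem (solvable since $\mathbb{C}^n$ is Stein with $H^2(\mathbb{C}^n,\mathbb{Z})=0$), and then a global holomorphic logarithm of the nowhere-vanishing quotient $F/f_F$ obtained from simple connectivity --- which collapses to the classical Weierstrass canonical product when $n=1$, exactly as the lemma asserts.
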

\begin{lem}\label{lem3.3}{\em\cite{P`olya & Lond & 1926}}
	If $g$ and $h$ are entire functions on the complex plane $\mathbb{C}$ and $g(h)$ is an entire function of finite order, then there are only two possible	cases: either
	\begin{enumerate}
		\item [(i)] the internal function $h$ is a polynomial and the external function $g$ is of finite order; or else
		\item [(ii)] the internal function $h$ is not a polynomial but a function of finite order, and the external function $g$ is of zero order.
\end{enumerate}\end{lem}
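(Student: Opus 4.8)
The plan is to reduce everything to two estimates for the maximum modulus $M(r,\phi)=\max_{|z|=r}|\phi(z)|$ and then compare orders $\rho(\phi)=\limsup_{r\to\infty}\frac{\log^{+}\log^{+}M(r,\phi)}{\log r}$. We may assume both $g$ and $h$ are nonconstant, since otherwise $g(h)$ is constant and there is nothing to prove; and, replacing $h$ by $h-h(0)$ and $g$ by $g(\,\cdot+h(0))$ (which alters neither $g(h)$ nor the orders of $g$ and $h$), we may assume $h(0)=0$. The first estimate is the trivial upper bound $M(r,g(h))\le M(M(r,h),g)$, which holds because $h(\{|z|\le r\})\subset\{|w|\le M(r,h)\}$. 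The second is the classical Pólya lower bound
\[
M(r,g(h))\ \ge\ M\!\left(\tfrac18 M(\tfrac r2,h),\,g\right),
\]
valid for all large $r$.

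The main obstacle is this lower bound, and I would treat it as the single genuinely hard input. Its content is geometric: the image $h(\{|z|\le r\})$ is connected, contains $0=h(0)$ and a point of modulus $M(r,h)$, and in fact it \emph{surrounds} the circle $\{|w|=\tfrac18 M(\tfrac r2,h)\}$, so that $g$ is forced to attain on this image its full maximum over that circle. I would establish the surrounding property by a Borel--Carathéodory estimate together with the argument principle (equivalently, the minimum/maximum modulus principle): if some $w_0$ with $|w_0|=\tfrac18 M(\tfrac r2,h)$ were omitted by $h$ on $\{|z|\le r\}$, then $\log|h(z)-w_0|$ would be harmonic there, and the resulting control on $\max_{|z|=r/2}|h(z)-w_0|$ would contradict $|w_0|$ being so much smaller than $M(\tfrac r2,h)$. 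This is exactly Pólya's cited estimate. As an immediate consequence, since $g$ is nonconstant a Cauchy-coefficient argument gives $M(s,g)\ge\sqrt s$ for all large $s$; taking $s=\tfrac18 M(\tfrac r2,h)$ yields $\log M(r,g(h))\ge\tfrac12\log M(\tfrac r2,h)+O(1)$, hence $\rho(h)\le\rho(g(h))<\infty$. Thus $h$ \emph{always} has finite order, and it remains only to locate $g$.

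Granting the two bounds, I split on whether $h$ is a polynomial. If $h$ is a polynomial of degree $d\ge 1$, then $M(r,h)\asymp r^{d}$ and $M(\tfrac r2,h)\asymp r^{d}$, so the lower bound reads $M(r,g(h))\ge M(c_1 r^{d},g)$. Substituting $s\asymp r^{d}$, so that $\log r=\tfrac1d\log s+O(1)$, gives $\rho(g(h))\ge d\,\rho(g)$, whence $\rho(g)\le \rho(g(h))/d<\infty$. Together with the upper bound $M(r,g(h))\le M(c_2 r^{d},g)$ one in fact gets $\rho(g(h))=d\,\rho(g)$, but finiteness alone is what is needed. This is conclusion (i): $h$ a polynomial and $g$ of finite order.

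Finally, suppose $h$ is transcendental. Then $h$ is not a polynomial, so $\log M(r,h)/\log r\to\infty$, i.e. $M(\tfrac r2,h)$ grows faster than every power of $r$. If $g$ had positive order $\sigma=\rho(g)>0$, choose $s_k\to\infty$ with $\log M(s_k,g)\ge s_k^{\sigma/2}$; using that $r\mapsto\tfrac18 M(\tfrac r2,h)$ is continuous, increasing and tends to $\infty$, pick $r_k$ with $\tfrac18 M(\tfrac{r_k}2,h)=s_k$. The lower bound then gives $\log M(r_k,g(h))\ge s_k^{\sigma/2}=c\,M(\tfrac{r_k}2,h)^{\sigma/2}\ge r_k^{N}$ for every fixed $N$ and all large $k$, contradicting the finiteness of $\rho(g(h))$. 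Hence $\rho(g)=0$, and combined with the finite order of $h$ established above this is conclusion (ii). The two cases being exhaustive, the proof is complete; the only place requiring real work is the Pólya lower bound, the remainder being bookkeeping with $M(r,\cdot)$ and $\rho(\cdot)$.
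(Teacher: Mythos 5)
The paper offers no proof of this lemma at all: it is imported verbatim from P\'olya (1926) and used as a black box, so there is no internal argument to compare yours against; the only meaningful benchmark is the classical proof, and what you have written is essentially that proof. Your bookkeeping is correct throughout: the normalization $h(0)=0$ is harmless, the upper bound $M(r,g(h))\le M(M(r,h),g)$ is trivial, the lower bound $M(r,g(h))\ge M\bigl(\tfrac18 M(\tfrac r2,h),g\bigr)$ combined with $M(s,g)\ge\sqrt{s}$ (valid for large $s$ once $g$ is nonconstant, by a Cauchy coefficient estimate) gives $\rho(h)\le\rho(g(h))<\infty$ unconditionally, and the split into $h$ polynomial versus $h$ transcendental (where $\log M(r,h)/\log r\to\infty$) then delivers exactly conclusions (i) and (ii). One small caveat: the dichotomy as literally stated fails for constant $h$ (take $h\equiv 0$ and $g$ of infinite order; then $g(h)$ is constant but neither (i) nor (ii) holds), so ``there is nothing to prove'' should really be ``the statement tacitly assumes $g$ and $h$ nonconstant,'' which is how P\'olya's theorem is always understood.

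The more substantive issue is your parenthetical sketch of the key covering estimate, which would not survive being written out. If $h$ omits $w_0$ on $\{|z|\le r\}$, the harmonic-majorization (Borel--Carath\'eodory/Harnack) argument you describe controls $h$ on $\{|z|\le r/2\}$ in terms of $M(r,h)$, not $M(\tfrac r2,h)$: setting $A=M(r,h)+|w_0|$ and applying Harnack to the nonnegative harmonic function $\log\bigl(A/|h(z)-w_0|\bigr)$ yields $|h(z)-w_0|\le A^{2/3}|w_0|^{1/3}$ on $|z|=r/2$, which produces a contradiction only when $|w_0|$ is of order $M(\tfrac r2,h)^3/M(r,h)^2$ or smaller. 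Since $M(r,h)$ may be enormously larger than $M(\tfrac r2,h)$, this naive argument covers a circle of radius far smaller than $\tfrac18 M(\tfrac r2,h)$ --- a radius that need not even tend to infinity, hence useless in the transcendental case. The full-strength statement that the image of $\{|z|\le r\}$ covers the whole circle $\{|w|=\tfrac18 M(\tfrac r2,h)\}$ is Bohr's covering theorem, which is the genuinely delicate tool P\'olya actually used. Because you explicitly invoke the inequality as the cited classical input and correctly flag it as the single hard step, the overall proof stands on that citation; but the two-line justification you offer for it should be deleted or replaced by a proper reference, since as sketched it does not prove the stated estimate.
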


\begin{lem}\label{lem3.1a}{\em\cite{Hu & Li & Yang & 2003}}
	Let $f_j\not\equiv0$ $(j=1,2,3)$ be meromorphic functions on $\mathbb{C}^{n}$ such that $f_1$ are not constant, $f_1+f_2+f_3=1$, and such that \beas \sum_{j=1}^{3}\left\{N_{2}\left(r,\frac{1}{f_j}\right)+2\ol N(r,f_j)\right\}< \lambda T(r,f_j)+O(\log^{+}T(r,f_j))\eeas
	holds for all $r$ outside possibly a set with finite logarithmic measure, where $\lambda < 1$ is a positive number. Then, either $f_2 = 1$ or $f_3=1$.	
\end{lem}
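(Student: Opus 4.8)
The plan is to deduce this three–term identity from the generalized Borel-type result already available in Lemma~\ref{lem3.1}, supplemented by the Second Main Theorem in several complex variables, splitting the argument according to which of $f_2,f_3$ happen to be constant. I would first record the basic dichotomy: $f_2$ and $f_3$ cannot both be constant, since $f_1+f_2+f_3=1$ would then make $f_1$ constant, against hypothesis. Thus either both $f_2,f_3$ are non-constant, or exactly one of them is, and I would treat these two situations in turn.

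In the first situation, where $f_2$ and $f_3$ are both non-constant, I note that $f_1,f_2$ are then both non-constant and that the growth hypothesis is assumed for all admissible $r$, so in particular the inequalities needed for the indices $j=1,2$ hold. Lemma~\ref{lem3.1} with $m=3$ therefore applies directly and gives $f_3=1$, which is already one of the two desired conclusions.

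In the second situation I may assume, by the symmetry of $f_1+f_2+f_3=1$ in $f_2$ and $f_3$, that $f_3\equiv b$ is constant (the alternative yielding $f_2=1$ by the identical reasoning). If $b=1$ there is nothing more to prove, so I would suppose $b\neq1$ and aim for a contradiction. Here $f_1+f_2=1-b\neq0$ with $f_2=(1-b)-f_1$ forced to be non-constant, and setting $g:=f_1/(1-b)$ turns the zeros of $f_1$, the $(1-b)$-points of $f_1$ (that is, the zeros of $f_2$) and the poles of $f_1$ into the $0$-, $1$- and $\infty$-points of $g$. I would then apply the Second Main Theorem in $\mathbb{C}^n$ to $g$ with targets $0,1,\infty$, namely $T(r,g)\le \ol N(r,1/g)+\ol N(r,1/(g-1))+\ol N(r,g)+S(r,g)$, dominate each reduced counting term by the corresponding $N_2$ or $\ol N$ appearing on the left of the hypothesis, and use $T(r,f_2)=T(r,f_1)+O(1)$ and $T(r,f_3)=O(1)$ to make every $T(r,f_j)$ comparable to $T(r,f_1)$. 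The hypothesis with $\lambda<1$ would then collapse to $(1-\lambda)T(r,f_1)\le S(r,f_1)+O(\log^+T(r,f_1))$, which is incompatible with $f_1$ being non-constant; hence $b=1$ and $f_3=1$.

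I expect the genuine obstacle to be exactly this degenerate two-term subcase $f_1+f_2=1-b$ with $b\neq1$, where Lemma~\ref{lem3.1} (valid only for $m\ge3$) no longer applies and one must instead invoke the several-variable Second Main Theorem, equivalently a Picard/Borel-type argument for a meromorphic function whose $0$-, $1$- and $\infty$-value distribution is too thin. The remaining work is the routine but careful book-keeping: checking that the error term $S(r,g)$ and the quantities $O(\log^+T(r,f_j))$ are negligible outside a set of finite logarithmic measure, and that the comparability of the characteristic functions lets the single factor $\lambda<1$ absorb the leading term.
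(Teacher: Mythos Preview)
The paper does not prove this lemma at all: it is quoted verbatim from \cite{Hu & Li & Yang & 2003} as a known result, exactly like Lemma~\ref{lem3.1}, and is used only as a black box in the proofs of Theorems~\ref{t1} and~\ref{t2}. So there is no ``paper's own proof'' to compare against.

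That said, your derivation is sound. The reduction to Lemma~\ref{lem3.1} when $f_2,f_3$ are both non-constant is immediate (with $m=3$ the truncation level $N_{m-1}=N_2$ and the weight $m-1=2$ match perfectly), and your treatment of the degenerate case $f_3\equiv b\neq 1$ via the Second Main Theorem for $g=f_1/(1-b)$ with targets $0,1,\infty$ is exactly the standard way one collapses the two-term Borel identity. The book-keeping you flag (that $T(r,f_2)=T(r,f_1)+O(1)$, that the $f_3$-terms vanish, and that $\ol N\le N_2$) is routine. One small remark: in your first case, Lemma~\ref{lem3.1} yields $f_3=1$, which a posteriori contradicts ``$f_3$ non-constant''; this simply means that case does not occur, and the disjunctive conclusion is obtained in the remaining cases, so nothing is lost.
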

\begin{lem}\label{lem3.7}{\em\cite{Hu & Li & Yang & 2003}}
	Let $a_0(z), a_1(z),\ldots, a_n(z)$ $(n\geq1)$ be meromorphic functions on $\mathbb{C}^m$ and $g_0(z), g_1(z),\ldots,g_n(z)$ are entire functions on $\mathbb{C}^m$ such that $g_j(z)-g_k(z)$ are not constants for $0\leq j<k\leq n$. If $\sum_{j=0}^{n}a_j(z)e^{g_j(z)}\equiv0$, and $|| T(r,a_j)=o(T(r))$, where $T(r)=\text{min}_{0\leq j<k\leq n} T(r,e^{g_j-g_k})$ for $j=0,1,\ldots,n$, then $a_j(z)\equiv0$ for each $j=0,1,\ldots,n$.
\end{lem}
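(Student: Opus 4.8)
The plan is to argue by contradiction, showing that a nontrivial coefficient forces the exponential sum to be nonzero. First I would discard the trivial terms: set $I=\{j:a_j\not\equiv0\}$ and suppose $I\neq\emptyset$. Then $\sum_{j=0}^n a_je^{g_j}\equiv0$ reduces to $\sum_{j\in I}a_je^{g_j}\equiv0$, a relation among $p:=|I|$ terms with all coefficients nonvanishing. The growth hypothesis is inherited, because the minimum $T(r)$ taken over the smaller set of index pairs can only increase while each $T(r,a_j)$ remains $o(T(r))$. If $p=1$ then $a_je^{g_j}\equiv0$ forces $a_j\equiv0$, a contradiction, so I may assume $p\ge2$.

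Next I would normalize. Fixing $j_1\in I$ and dividing by $a_{j_1}e^{g_{j_1}}$ turns the relation into $\sum_{j\in I\setminus\{j_1\}}F_j=1$, where $F_j=-\frac{a_j}{a_{j_1}}e^{g_j-g_{j_1}}=:c_je^{h_j}$ and $q:=p-1$. The two decisive structural facts are: (a) since $e^{h_j}$ is entire and zero-free, the zeros and poles of $F_j$ come only from $c_j$, so $\overline N(r,F_j)$ and $N(r,1/F_j)$ are both $\le T(r,c_j)\le T(r,a_j)+T(r,a_{j_1})+O(1)=o(T(r))$; and (b) because $e^{h_j}=e^{g_j-g_{j_1}}$ is, up to a bounded term, one of the quotients appearing in the minimum defining $T(r)$, we have $T(r,e^{h_j})\ge T(r)+O(1)$, whence $T(r,F_j)\ge T(r,e^{h_j})-T(r,c_j)-O(1)\ge T(r)(1-o(1))$. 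In particular every $F_j$ is nonconstant.

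For $q\ge3$ I would invoke Lemma \ref{lem3.1} with $m=q$ and $f_i=F_i$. Its hypothesis asks, for each relevant index, that $\sum_i\{N_{n-1}(r,1/F_i)+(q-1)\overline N(r,F_i)\}$ be smaller than $\lambda T(r,F_j)+O(\log^+T(r,F_j))$ for some $\lambda<1$. By fact (a) the left side is $o(T(r))$, while by fact (b) the right side is at least $\lambda T(r)(1-o(1))$; taking for instance $\lambda=\tfrac12$, the strict inequality holds for all large $r$ outside a set of finite logarithmic measure. Lemma \ref{lem3.1} then yields $F_q=1$, contradicting the nonconstancy from (b). For $q=2$, writing $F_1+F_2=1$ and applying the Second Main Theorem to $F_1$ at $0,1,\infty$ gives $T(r,F_1)\le\overline N(r,F_1)+\overline N(r,1/F_1)+\overline N(r,1/(F_1-1))+S(r,F_1)$; since $F_1-1=-F_2$, all three counting terms are $o(T(r))$ by (a), so $T(r,F_1)\le o(T(r))+S(r,F_1)$, contradicting $T(r,F_1)\ge T(r)(1-o(1))$. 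Finally $q=1$ gives $F_1=1$ directly against nonconstancy. Every case is contradictory, so $I=\emptyset$ and $a_j\equiv0$ for all $j$.

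The main obstacle, and the step I would be most careful about, is the comparison between the global quantity $T(r)$, defined as the \emph{minimum} of the $T(r,e^{g_j-g_k})$, and the growth of the individual exponentials $e^{g_j-g_{j_1}}$ by which we divide. A naive route, such as differentiating to annihilate the constant term, introduces logarithmic-derivative factors $\partial_{z_l}h_j$ whose size is governed by $T(r,e^{h_j})$, which may be vastly larger than $T(r)$ in the infinite-order case and cannot be absorbed into $o(T(r))$. The observation that rescues the argument is fact (b): each difference we use is itself one of the terms realizing the minimum, so its characteristic dominates $T(r)$ from below, and this is precisely what renders the small-coefficient counting functions negligible against $\lambda T(r,F_j)$ in the hypothesis of Lemma \ref{lem3.1}. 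Verifying that this strict inequality truly holds off an exceptional set is the technical heart of the proof.
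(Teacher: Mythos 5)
The paper offers no proof of Lemma \ref{lem3.7} at all: it is imported verbatim from Hu--Li--Yang \cite{Hu & Li & Yang & 2003}, so there is no in-paper argument to compare yours against. Judged on its own merits, your proof is correct, and it is essentially the canonical derivation of this Borel-type statement from the paper's own Lemma \ref{lem3.1} together with the second main theorem in several variables (which the paper also uses elsewhere, so nothing outside its toolkit is invoked). The two structural facts you isolate do all the work and are verified correctly: since $F_j=c_je^{h_j}$ with $e^{h_j}$ zero-free, all counting functions of $F_j$ are controlled by $T(r,c_j)\le T(r,a_j)+T(r,a_{j_1})+O(1)=o(T(r))$; and since $h_j=g_j-g_{j_1}$ is, up to replacing a function by its reciprocal (an $O(1)$ change in the characteristic), one of the quotients over which the minimum defining $T(r)$ is taken, $T(r,F_j)\ge(1-o(1))T(r)$, which both certifies nonconstancy (using $T(r)\to\infty$, as each $g_j-g_k$ is nonconstant entire) and makes the smallness hypothesis of Lemma \ref{lem3.1} verifiable with, say, $\lambda=\tfrac12$ outside a set of finite logarithmic measure, so that the conclusion $F_q=1$ contradicts nonconstancy. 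Your observation that restricting to the support set $I$ can only increase the minimum, so the hypothesis $T(r,a_j)=o(T(r))$ is inherited, is the right way to make the reduction legitimate, and your case split is forced rather than decorative: Lemma \ref{lem3.1} requires $m\ge3$, so the $q=2$ case genuinely needs the direct application of the second main theorem to $F_1$ at $0$, $1$, $\infty$ (with $F_1-1=-F_2$ supplying the third small counting function), which you execute correctly, and $q=1$ collapses to $e^{g_{j_0}-g_{j_1}}$ equaling a small function, again contradicting $T(r,F_1)\ge(1-o(1))T(r)$. In short: a complete and correct proof of the quoted lemma, following the same route one finds in the cited source.
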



\begin{proof}[\textbf{Proof of Theorem $\ref{t1}$}]
Suppose that $(f_1,f_2)$ is a pair of transcendental entire functions of finite order satisfying system \eqref{e2.1}. First we write \eqref{e2.1} as follows.	
\bea\label{e3.1} \begin{cases}		\left[\frac{f_1(z)}{e^{\frac{g_1(z)}{2}}}+i\frac{f_2(z+c)}{e^{\frac{g_1(z)}{2}}}\right]\left[\frac{f_1(z)}{e^{\frac{g_1(z)}{2}}}-i\frac{f_2(z+c)}{e^{\frac{g_1(z)}{2}}}\right]=1,\vspace{1mm}\\\left[\frac{f_2(z)}{e^{\frac{g_2(z)}{2}}}+i\frac{f_1(z+c)}{e^{\frac{g_2(z)}{2}}}\right]\left[\frac{f_2(z)}{e^{\frac{g_2(z)}{2}}}-i\frac{f_1(z+c)}{e^{\frac{g_2(z)}{2}}}\right]=1.	\end{cases}\eea
\par Since $f_1$, $f_2$ are transcendental entire functions with finite order, in view of Lemmas \ref{lem3.2} and \ref{lem3.3}, there exist polynomials $p_1(z)$, $p_2(z)$ in $\mathbb{C}^n$ such that	\bea\label{e3.2}\begin{cases}		\frac{f_1(z)}{e^{\frac{g_1(z)}{2}}}+i\frac{f_2(z+c)}{e^{\frac{g_1(z)}{2}}}=e^{p_1(z)},\vspace{1mm}\\\frac{f_1(z)}{e^{\frac{g_1(z)}{2}}}-i\frac{f_2(z+c)}{e^{\frac{g_1(z)}{2}}}=e^{-p_1(z)},\vspace{1mm}\\\frac{f_2(z)}{e^{\frac{g_2(z)}{2}}}+i\frac{f_1(z+c)}{e^{\frac{g_2(z)}{2}}}=e^{p_2(z)},\vspace{1mm}\\\frac{f_2(z)}{e^{\frac{g_2(z)}{2}}}-i\frac{f_1(z+c)}{e^{\frac{g_2(z)}{2}}}=e^{-p_2(z)}.\end{cases}\eea
	\par Set \bea\label{e3.3}\begin{cases}
		\alpha_1=\frac{1}{2}g_1(z)+p_1(z),\;\; \alpha_2=\frac{1}{2}g_1(z)-p_1(z),\\\alpha_3=\frac{1}{2}g_2(z)+p_2(z),\;\; \alpha_4=\frac{1}{2}g_2(z)-p_2(z).
	\end{cases}\eea
	\par In view of \eqref{e3.2} and \eqref{e3.3}, it follows that \bea\label{e3.4} \begin{cases}
		f_1(z)=\displaystyle\frac{1}{2}\left[e^{\alpha_1(z)}+e^{\alpha_2(z)}\right]\\ f_2(z+c)=\displaystyle\frac{1}{2i}\left[e^{\alpha_1(z)}-e^{\alpha_2(z)}\right]\\f_2(z)=\displaystyle\frac{1}{2}\left[e^{\alpha_3(z)}+e^{\alpha_4(z)}\right]\\f_1(z+c)=\displaystyle\frac{1}{2i}\left[e^{\alpha_3(z)}-e^{\alpha_4(z)}\right].\end{cases}\eea
	\par After simple calculation, it follows from \eqref{e3.4} that \bea\label{e3.5}\begin{cases} ie^{\alpha_1(z+c)-\alpha_3(z)}+ie^{\alpha_2(z+c)-\alpha_3(z)}+e^{\alpha_4(z)-\alpha_3(z)}=1, \\ ie^{\alpha_3(z+c)-\alpha_1(z)}+ie^{\alpha_4(z+c)-\alpha_1(z)}+e^{\alpha_2(z)-\alpha_1(z)}=1.\end{cases}\eea
	
	\par Now, we consider the following four possible cases.\vspace{1mm}
	\par \textbf{Case 1:} Let $\alpha_4(z)-\alpha_3(z)=\eta_1$ and $\alpha_2(z)-\alpha_1(z)=\eta_2$, where $\eta_1,\eta_2$ are constants in $\mathbb{C}$.\vspace{1.2mm}
\par Then, it clearly follows from \eqref{e3.4} that $p_1(z)$ and $p_2(z)$ both are constants in $\mathbb{C}$. Let $e^{p_1}=\xi_1$ and $e^{p_2}=\xi_2$. Then, \eqref{e3.4} can be rewritten as \bea\label{e3.6} \begin{cases}		f_1(z)=\frac{\xi_1+\xi_1^{-1}}{2}e^{\frac{1}{2}g_1(z)},\\ f_2(z+c)=\frac{\xi_1-\xi_1^{-1}}{2i}e^{\frac{1}{2}g_1(z)},\\f_2(z)=\frac{\xi_2+\xi_2^{-1}}{2}e^{\frac{1}{2}g_2(z)},\\f_1(z+c)=\frac{\xi_2-\xi_2^{-1}}{2i}e^{\frac{1}{2}g_2(z)}.\end{cases}\eea
\par After simple computations, it follows from \eqref{e3.6} that \bea\label{e3.7} \begin{cases} i(\xi_1+\xi_1^{-1})e^{\frac{1}{2}[g_1(z+c)-g_2(z)]}=\xi_2-\xi_2^{-1}\\i(\xi_2+\xi_2^{-1})e^{\frac{1}{2}[g_2(z+c)-g_1(z)]}=\xi_1-\xi_1^{-1}.\end{cases}\eea 
\par Since $\xi_j^4\neq0,1$ and $g_j(z_1,z_2)$ are polynomials in $\mathbb{C}^n$ for $=1,2$, it follows from \eqref{e3.7} that $g_1(z+c)-g_2(z)=\zeta_1$ and $g_2(z+c)-g_1(z)=\zeta_2$, where $\zeta_1,\zeta_2\in\mathbb{C}$. This implies that $g_1(z+2c)-g_2(z)=g_2(z+2c)-g_1(z)=\zeta_1+\zeta_2$. Thus, $g_1(z)=L(z)+\Phi(z)+B_1$ and $g_2(z)=L(z)+\Phi(z)+B_2$, where $L(z)=\sum_{j=1}^{n}a_jz_j$, $\Phi(z)$ is a polynomial in $\mathbb{C}^n$ defined in \eqref{e2.5}. Thus, from \eqref{e3.7}, we get 
\bea\label{e3.8} \begin{cases}		e^{\frac{1}{2}[L(c)+B_1-B_2]}=\frac{\xi_2-\xi_2^{-1}}{i(\xi_1+\xi_1^{-1})}\\e^{\frac{1}{2}[L(c)+B_2-B_1]}=\frac{\xi_1-\xi_1^{-1}}{i(\xi_2+\xi_2^{-1})}.\end{cases}\eea
\par Therefore, from \eqref{e3.8}, we obtain \beas e^{L(c)}=-\frac{(\xi_1^2-1)(\xi_2^2-1)}{(\xi_1^2+1)(\xi_2^2+1)}\;\;\text{and}\;\; e^{B_1-B_2}=\frac{\xi_1^2(\xi_2^4-1)}{\xi_2^2(\xi_1^2+1)}.\eeas
\par \textbf{Case 2.} Let $\alpha_4(z)-\alpha_3(z)$ and $\alpha_2(z)-\alpha_1(z)$ both are non-constants. Then, by Lemma \ref{lem3.1a}, it follows from \eqref{e3.5} that \beas ie^{\alpha_1(z+c)-\alpha_3(z)}=1,\;\;\text{or}\;\;ie^{\alpha_2(z+c)-\alpha_3(z)}=1\eeas and \beas ie^{\alpha_3(z+c)-\alpha_1(z)}=1,\;\;\text{or}\;\;ie^{\alpha_4(z+c)-\alpha_1(z)}=1.\eeas
\par Now, we consider the following subcases.\vspace{1.2mm}
\par \textbf{Subcase 2.1.} Let \bea\label{e3.9}		ie^{\alpha_1(z+c)-\alpha_3(z)}=1\;\;\text{and}\;\;ie^{\alpha_3(z+c)-\alpha_1(z)}=1.\eea
\par Then, from \eqref{e3.5} and \eqref{e3.9}, we obtain \bea\label{e3.10} -ie^{\alpha_2(z+c)-\alpha_4(z)}=1\;\;\text{and}\;\;-ie^{\alpha_4(z+c)-\alpha_1(z)}=1.\eea
\par Since $\alpha_j$'s, $j=1,2,3,4$ are all polynomials in $\mathbb{C}^n$, it follows from \eqref{e3.9} and \eqref{e3.10} that $\alpha_1(z+c)-\alpha_3(z)=\eta_1$, $\alpha_3(z+c)-\alpha_1(z)=\eta_2$, $\alpha_2(z+c)-\alpha_4(z)=\eta_3$ and $\alpha_4(z+c)-\alpha_2(z)=\eta_4$, $\eta_j\in\mathbb{C}$ for $j=1,2,3,4$. This implies that $\alpha_1(z+2c)-\alpha_1(z)=\eta_1+\eta_2=\alpha_3(z+2c)-\alpha_3(z)$ and $\alpha_2(z+2c)-\alpha_2(z)=\eta_3+\eta_4=\alpha_4(z+2c)-\alpha_4(z)$. Thus, $\alpha_1(z)=L_1(z)+\Phi(z)+B_1$, $\alpha_3(z)=L_1(z)+\Phi(z)+B_2$, $\alpha_2(z)=L_2(z)+\Psi(z)+B_3$ and $\alpha_4(z)=L_2(z)+\Psi(z)+B_4$, where $L_i(z)=\sum_{j=1}^{n}a_{ij}z_j$, $i=1,2$ and $\Phi(z),\Psi(z)$ are polynomials in $\mathbb{C}^n$ defined as in \eqref{e2.5} and \eqref{e2.6}, respectively, $a_{1j},a_{2j}, B_i\in\mathbb{C}$ for $j=1,2,\ldots,n$ and $i=1,\ldots,4$. Thus, $g_1(z)=L(z)+R(z)+D_1$ and $g_2(z)=L(z)+R(z)+D_2$, where $L(z)=L_1(z)+L_2(z)$, $R(z)=\Phi(z)+\Psi(z)$, $D_1=B_1+B_2$ and $D_2=B_3+B_4$.\vspace{1.2mm}
\par As $\alpha_4-\alpha_3$ and $\alpha_2-\alpha_1$ both are non-constants, it follows that $L_1(z)+\Phi(z)\neq L_2(z)+\Psi(z)$. Therefore, from \eqref{e3.9} and \eqref{e3.10}, we obtain that\bea\label{e3.11} \begin{cases}ie^{L_1(c)+B_1-B_2}=1,\;\;ie^{L_1(c)+B_2-B_1}=1,\\ie^{L_2(c)+B_3-B_4}=-1,\;\;ie^{L_2(c)+B_4-B_2}=-1.\end{cases}\eea
\par After simple calculation, it follows from \eqref{e3.11} that \beas e^{2L_1(c)}=-1,\;\;e^{2L_2(c)}=-1,\;\;e^{2(B_1-B_2)}=1\;\;\text{and}\;\; e^{2(B_3-B_4)}=1.\eeas 
\par From the above relations, we have $e^{L_1(c)}=\pm i$, $e^{L_2(c)}=\pm i$, $e^{B_1-B_2}=\pm 1$ and $e^{B_3-B_4}=\pm 1$. Therefore, in view of \eqref{e3.11}, we obtain the following four possible  relations.\vspace{1.2mm}
\begin{enumerate}
\item [\textbf{(a)}] $e^{L_1(c)}=i$, $e^{L_2(c)}=i$, $e^{B_1-B_2}=-1$, $e^{B_3-B_4}=1$.\vspace{1.2mm}
\item [\textbf{(b)}] $e^{L_1(c)}=i$, $e^{L_2(c)}=-i$, $e^{B_1-B_2}=-1$, $e^{B_3-B_4}=-1$.\vspace{1.2mm}
\item [\textbf{(c)}] $e^{L_1(c)}=-i$, $e^{L_2(c)}=i$, $e^{B_1-B_2}=1$, $e^{B_3-B_4}=1$.\vspace{1.2mm}
\item [\textbf{(d)}] $e^{L_1(c)}=-i$, $e^{L_2(c)}=-i$, $e^{B_1-B_2}=1$, $e^{B_3-B_4}=-1$.\vspace{1.2mm}
\end{enumerate}
\par Hence, it follows from \eqref{e3.4} that \beas\begin{cases} f_1(z)=\frac{1}{2}\left[e^{L_1(z)+\Phi(z)+B_1}+e^{L_2(z)+\Psi(z)+B_3}\right],\vspace{1mm}\\ f_2(z)=\frac{1}{2}\left[e^{L_1(z)+\Phi(z)+B_2}+e^{L_2(z)+\Psi(z)+B_4}\right].\end{cases}\eeas 
\par \textbf{Subcase 2.2.} Let \bea\label{e3.12}ie^{\alpha_1(z+c)-\alpha_3(z)}=1\;\;\text{and}\;\; ie^{\alpha_4(z+c)-\alpha_3(z)}=1.\eea
\par Therefore, it follows from \eqref{e3.5} and \eqref{e3.12} that \bea\label{e3.13} ie^{\alpha_2(z+c)-\alpha_4(z)}=-1\;\;\text{and}\;\;ie^{\alpha_3(z+c)-\alpha_2(z)}=-1.\eea
\par As all $\alpha_j$'s are polynomials in $\mathbb{C}^n$, it follows from \eqref{e3.12} and \eqref{e3.13} that 
$\alpha_1(z+c)-\alpha_3(z)=\zeta_1$, $\alpha_4(z+c)-\alpha_3(z)=\zeta_2$, $\alpha_2(z+c)-\alpha_4(z)=\zeta_3$ and $\alpha_3(z+c)-\alpha_2(z)=\zeta_4$, $\zeta_j\in\mathbb{C}$ for $j=1,2,3,4$. This implies that $\alpha_1(z+2c)-\alpha_2(z)=\zeta_1+\zeta_4$ and $\alpha_2(z+2c)-\alpha_1(z)=\zeta_2+\zeta_3$. Thus, $\alpha_1(z+4c)-\alpha_1(z)=\alpha_2(z+4c)-\alpha_2(z)=\sum_{j=1}^{4}\zeta_j$. Hence, we can get that $\alpha_1(z)=L(z)+\Phi(z)+k_1$ and $\alpha_2(z)=L(z)+\Phi(z)+k_2$, where $L(z)=\sum_{j=1}^{n}a_jz_j$, $\Phi(z)$ is defined as in \eqref{e2.5}, $a_j, k_1,k_2\in\mathbb{C}$ for $j=1,2,\ldots,n$. But, the we must obtain that $\alpha_2(z)-\alpha_1(z)=k_2-k_1\in\mathbb{C}$, a contradiction.\vspace{1.2mm}
\par \textbf{Subcase 2.3.}  Let \beas ie^{\alpha_2(z+c)-\alpha_3(z)}=1\;\;\text{and}\;\; ie^{\alpha_3(z+c)-\alpha_3(z)}=1.\eeas
\par By similar argument as used in \textbf{Subcase 2.2}, we can easily get a contradiction.\vspace{1.2mm}
\par \textbf{Subcase 2.4.}  Let \bea\label{e3.14} ie^{\alpha_2(z+c)-\alpha_3(z)}=1\;\; ie^{\alpha_4(z+c)-\alpha_1(z)}=1.\eea
\par In view of \eqref{e3.5} and \eqref{e3.14}, it follows that \bea\label{e3.15} ie^{\alpha_1(z+c)-\alpha_4(z)}=-1\;\;\text{and}\;\; ie^{\alpha_3(z+c)-\alpha_2(z)}=-1.\eea
\par Since $\alpha_1, \alpha_2,\alpha_3,\alpha_4$ are all polynomials in $\mathbb{C}^n$, it follows from \eqref{e3.14} and \eqref{e3.15} that $\alpha_2(z+c)-\alpha_3(z)=l_1$, $\alpha_4(z+c)-\alpha_1(z)=l_2$, $\alpha_1(z+c)-\alpha_4(z)=l_3$ and $\alpha_3(z+c)-\alpha_2(z)=l_4$, $l_j\in\mathbb{C}$ for $j=1,2,3,4$. This implies that $\alpha_2(z+2c)-\alpha_2(z)=\alpha_3(z+2c)-\alpha_3(z)=l_1+l_4$ and $\alpha_4(z+2c)-\alpha_4(z)=\alpha_1(z+2c)-\alpha_1(z)=l_2+l_3$. Therefore, we must have $\alpha_2(z)=L_1(z)+\Phi(z)+B_1$, $\alpha_3(z)=L_1(z)+\Phi(z)+B_2$, $\alpha_1(z)=L_2(z)+\Psi(z)+B_3$ and $\alpha_4(z)=L_2(z)+\Psi(z)+B_4$, where $L_i(z)=\sum_{j=1}^{n}a_{ij}z_j$, $i=1,2$ and $\Phi(z),\Psi(z)$ are polynomials in $\mathbb{C}^n$ defined as in \eqref{e2.5} and \eqref{e2.6}, respectively, $a_{1j},a_{2j}, B_i\in\mathbb{C}$ for $j=1,2,\ldots,n$ and $i=1,\ldots,4$. Thus, $g_1(z)=L(z)+R(z)+D_1$ and $g_2(z)=L(z)+R(z)+D_2$, where $L(z)=L_1(z)+L_2(z)$, $R(z)=\Phi(z)+\Psi(z)$, $D_1=B_1+B_3$ and $D_2=B_2+B_4$.\vspace{1.2mm}
\par Hence, from \eqref{e3.14} and \eqref{e3.15}, we get \bea\label{e3.16} \begin{cases}		ie^{L_1(c)+B_1-B_2}=1,\;\;ie^{L_2(c)+B_4-B_3}=1,\\ie^{L_2(c)+B_3-B_4}=-1\;\;ie^{L_1(c)+B_2-B_1}=-1.\end{cases}\eea
\par After simple computation, it follows from \eqref{e3.16} that \beas e^{2L_1(c)}=1,\;\;e^{2L_2(c)}=1,\;\;e^{2(B_1-B_2)}=-1\;\;\text{and}\;\; e^{2(B_3-B_4)}=-1.\eeas
\par Therefore, form the above relation, we get $e^{L_1(c)}=\pm 1$, $e^{L_2(c)}=\pm 1$, $e^{B_1-B_2}=\pm i$ and $e^{B_3-B_4}=\pm i$. Therefore, in view of \eqref{e3.11}, we obtain the following four possible  relations.\vspace{1.2mm}
\begin{enumerate}
	\item [\textbf{(a)}] $e^{L_1(c)}=1$, $e^{L_2(c)}=1$, $e^{B_1-B_2}=-i$, $e^{B_3-B_4}=i$.\vspace{1.2mm}
	\item [\textbf{(b)}] $e^{L_1(c)}=1$, $e^{L_2(c)}=-1$, $e^{B_1-B_2}=-i$, $e^{B_3-B_4}=-i$.\vspace{1.2mm}
	\item [\textbf{(c)}] $e^{L_1(c)}=-1$, $e^{L_2(c)}=1$, $e^{B_1-B_2}=i$, $e^{B_3-B_4}=i$.\vspace{1.2mm}
	\item [\textbf{(d)}] $e^{L_1(c)}=-1$, $e^{L_2(c)}=-1$, $e^{B_1-B_2}=i$, $e^{B_3-B_4}=-i$.\vspace{1.2mm}
\end{enumerate}
\par Hence, it follows from \eqref{e3.4} that \beas\begin{cases} f_1(z)=\frac{1}{2}\left[e^{L_1(z)+\Psi(z)+B_3}+e^{L_2(z)+\Phi(z)+B_1}\right],\vspace{1mm}\\ f_2(z)=\frac{1}{2}\left[e^{L_1(z)+\Phi(z)+B_2}+e^{L_2(z)+\Psi(z)+B_4}\right].\end{cases}\eeas 
	\par \textbf{Case 3.} Let $\alpha_4-\alpha_3=\eta\in\mathbb{C}$ and $\alpha_2-\alpha_1$ be non-constant. Then from \eqref{e3.3}, we see that  $p_2(z)$ is a constant in $\mathbb{C}$.\vspace{1.2mm}
	\par Since $\alpha_2-\alpha_1$ is non-constant, by Lemma \ref{lem3.1a} and from second equation of \eqref{e3.5}, we have either $ie^{\alpha_3(z+c)-\alpha_1(z)}=1$ or $ie^{\alpha_4(z+c)-\alpha_1(z)}=1$.\vspace{1.2mm}
	\par First suppose that $ie^{\alpha_3(z+c)-\alpha_1(z)}=1$. Then, from the second equation of \eqref{e3.5}, we have $ie^{\alpha_4(z+c)-\alpha_2(z)}=-1$. As all $\alpha_j$, $j=1,2,3,4$ are polynomials in $\mathbb{C}^n$, it follows that $\alpha_3(z+c)-\alpha_1(z)=m_1$ and $\alpha_4(z+c)-\alpha_2(z)=m_2$, where $m_1,m_2\in\mathbb{C}$. Therefore, from \eqref{e3.3}, it follows that $\alpha_2(z)-\alpha_1(z)=-2h_1(z)=m_1-m_2-2h_2$, a constant in $\mathbb{C}$, which contradicts to our assumption.\vspace{1.2mm}
	\par Similarly, we can get a contradiction for the case $ie^{\alpha_4(z+c)-\alpha_1(z)}=1$.\vspace{1.2mm}
	\par \textbf{Case 4.} Let $\alpha_2-\alpha_1=\eta_1\in\mathbb{C}$ and $\alpha_4-\alpha_3$ be non-constant. Then, by similar argument as used in \textbf{Case 3}, we easily get a contradiction.
\end{proof}
\begin{proof}[\textbf{Proof of Theorem $\ref{t2}$}]
Let $(f_1,f_2)$ is a pair of finite transcendental entire solutions of the system \eqref{e2.3}.\vspace{1.2mm}
\par Then, in a similar manner as in Theorem \ref{t1}, we obtain that \bea\label{e3.17} \begin{cases}
\dfrac{\partial^k f_1}{\partial z _1^k}=\displaystyle\frac{1}{2}\left[e^{\alpha_1(z)}+e^{\alpha_2(z)}\right]\\ f_2(z+c)=\displaystyle\frac{1}{2i}\left[e^{\alpha_1(z)}-e^{\alpha_2(z)}\right]\\\dfrac{\partial^k f_2}{\partial z_1^k}=\displaystyle\frac{1}{2}\left[e^{\alpha_3(z)}+e^{\alpha_4(z)}\right]\\f_1(z+c)=\displaystyle\frac{1}{2i}\left[e^{\alpha_3(z)}-e^{\alpha_4(z)}\right],\end{cases}\eea where $\alpha_1, \alpha_2,\alpha_3,\alpha_4$ are defined in \eqref{e3.3}.	
\par After simple computations, it follows from \eqref{e3.17} that \bea\label{e3.18} \begin{cases}
ie^{\alpha_1(z+c)-\alpha_3(z)}+ie^{\alpha_2(z+c)-\alpha_3(z)}+p_2(z)e^{\alpha_4(z)-\alpha_3(z)}=p_1(z),\\ie^{\alpha_3(z+c)-\alpha_1(z)}+ie^{\alpha_4(z+c)-\alpha_1(z)}+p_4(z)e^{\alpha_2(z)-\alpha_1(z)}=p_3(z),\end{cases}\eea with \beas \begin{cases}
p_1(z)=\left(\frac{\partial \alpha_3}{\partial z_1}\right)^k+M_k\left(\frac{\partial^k \alpha_3}{\partial z_1^k},\ldots, \frac{\partial \alpha_3}{\partial z_1}\right),\\ p_2(z)=\left(\frac{\partial \alpha_4}{\partial z_1}\right)^k+N_k\left(\frac{\partial^k \alpha_4}{\partial z_1^k},\ldots, \frac{\partial \alpha_4}{\partial z_1}\right)\\p_3(z)=\left(\frac{\partial \alpha_1}{\partial z_1}\right)^k+O_k\left(\frac{\partial^k \alpha_1}{\partial z_1^k},\ldots, \frac{\partial \alpha_1}{\partial z_1}\right),\\p_4(z)=\left(\frac{\partial \alpha_2}{\partial z_1}\right)^k+R_k\left(\frac{\partial^k \alpha_2}{\partial z_1^k},\ldots, \frac{\partial \alpha_2}{\partial z_1}\right),\end{cases}\eeas where $M_k$ is the partial differential polynomial in $ \frac{\partial \alpha_3}{\partial z_1}$ of order less than $k$ in which $ \frac{\partial \alpha_3}{\partial z_1}$ appears in product with at least one higher order partial derivatives  with respect to $z_1$ of $\alpha_3$. Similar definitions for $N_k, O_k$ and $R_k$.
\par Now, we consider the following four possible cases.\vspace{1.2mm}
\par \textbf{Case 1.} Let $\alpha_2(z)-\alpha_1(z)=\eta_1$ and $\alpha_4(z)-\alpha_3(z)=\eta_2$, where $\eta_1,\eta_2\in\mathbb{C}$. Then, in view of \eqref{e3.3}, it follows that $h_1(z)$ and $h_2(z)$ are both constants in $\mathbb{C}$. Set $e^{h_1}=\xi_1$ and $e^{h_2}=\xi_2$, $\xi_1,\xi_2$ are non-zero constants in $\mathbb{C}$. Therefore, in view of \eqref{e3.17}, we obtain that \bea\label{e3.19} \begin{cases}
\dfrac{\partial^k f_1}{\partial z_1^k}=\dfrac{\xi_1+\xi_1^{-1}}{2}e^{\frac{1}{2}g_1(z)},\\ f_2(z+c)=\dfrac{\xi_1-\xi_1^{-1}}{2i}e^{\frac{1}{2}g_1(z)},\\\dfrac{\partial^k f_2}{\partial z_1^k}=\dfrac{\xi_2+\xi_2^{-1}}{2}e^{\frac{1}{2}g_2(z)},\\f_1(z+c)=\dfrac{\xi_2-\xi_2^{-1}}{2i}e^{\frac{1}{2}g_2(z)}.\end{cases}\eea
\par It is clear from \eqref{e3.19} that $\xi_1^4,\xi_2^4\neq1$. Also, after simple computations, we obtain from \eqref{e3.19} that \bea\label{e3.20} \begin{cases}
e^{\frac{1}{2}[g_1(z+c)-g_2(z)]}=\dfrac{\xi_2-\xi_2^{-1}}{2i(\xi_1+\xi_1^{-1})}q_1(z),\\e^{\frac{1}{2}[g_2(z+c)-g_1(z)]}=\dfrac{\xi_1-\xi_1^{-1}}{2i(\xi_2+\xi_2^{-1})}q_2(z),\end{cases}\eea with \beas q_1(z)=\left(\frac{1}{2}\frac{\partial g_2}{\partial z_1}\right)^k+T_1\left(\frac{\partial^kg_2}{\partial z_1^k},\ldots, \frac{\partial g_2}{\partial z_1}\right),\;q_2(z)=\left(\frac{1}{2}\frac{\partial g_1}{\partial z_1}\right)^k+T_2\left(\frac{\partial^kg_1}{\partial z_1^k},\ldots, \frac{\partial g_1}{\partial z_1}\right),\eeas where $T_1$ is a partial differential polynomial in $\frac{\partial g_2}{\partial z_1}$ of order less than $k$ where $\frac{\partial g_2}{\partial z_1}$ appears in the product with at least one more partial derivative of higher dimension, and similar definition for $T_2$.\vspace{1.2mm}
\par Since $g_1(z),g_2(z)$ are polynomials in $\mathbb{C}^2$, it follows from \eqref{e3.20} that $q_1,q_2, g_2(z+c)-g_1(z)$ and $g_2(z+c)-g_1(z)$ are all constants in $\mathbb{C}$. Let $g_1(z+c)-g_2(z)=\eta_3$ and $g_2(z+c)-g_1(z)=\eta_4$, $\eta_3,\eta_4\in\mathbb{C}$. This implies that $g_1(z+2c)-g_1(z)=g_2(z+2c)-g_2(z)=\eta_3+\eta_4$.\vspace{1.2mm}
\par Thus, we have \beas g_1(z)=L(z)+H(s)+B_1,\;\;\text{and}\;\;g_2(z)=L(z)+H(s)+B_2,\eeas where $L(z)=a_1z_1+a_2z_2$, $H(s)$ is a polynomial in $s:=d_1z_1+d_2z_2$ with $d_1c_1+d_2c_2=0$, $a_1,a_2,B_1,B_2,d_1,d_2\in\mathbb{C}$. Also from the form of $q_1(z)$ and $q_2(z)$, we conclude that $\frac{\partial g_1}{\partial z_1}$ and $\frac{\partial g_2}{\partial z_1}$ both are constants. Therefore, we must have $s:=d_2z_2$ with $d_2c_2=0$.\vspace{1.2mm}
\par Therefore, it follows from \eqref{e3.20} that \bea\label{e3.21} \begin{cases}
e^{\frac{1}{2}[L(c)+B_1-B_2]}=\left(\dfrac{a_1}{2}\right)^k\dfrac{\xi_2-\xi_2^{-1}}{i(\xi_1+\xi_1^{-1})},\\e^{\frac{1}{2}[L(c)+B_2-B_1]}=\left(\dfrac{a_1}{2}\right)^k\dfrac{\xi_1-\xi_1^{-1}}{i(\xi_2+\xi_2^{-1})}.\end{cases}\eea
\par Hence, from we get \eqref{e3.21} that \beas e^{L(c)}=-\left(\dfrac{a_1}{2}\right)^{2k}\dfrac{(\xi_1-\xi_1^{-1})(\xi_2-\xi_2^{-1})}{(\xi_1+\xi_1^{-1})(\xi_2+\xi_2^{-1})}\;\;\text{and}\;\;e^{B_1-B_2}=\dfrac{\xi_2^2-\xi_2^{-2}}{\xi_1^2-\xi_1^{-2}}.\eeas
\par Thus, from \eqref{e3.19}, we get \beas\begin{cases}
f_1(z_1,z_2)=\dfrac{\xi_2^2-1}{2i\xi_2}e^{\frac{1}{2}[L(z)+H(s)-L(c)+B_2]},\\ f_2(z_1,z_2)=\dfrac{\xi_1^2-1}{2i\xi_1}e^{\frac{1}{2}[L(z)+H(s)-L(c)+B_1]}.\end{cases}\eeas
\par \textbf{Case 2.} Let $\alpha_2(z)-\alpha_1(z)$ and $\alpha_4(z)-\alpha_3(z)$ both are non-constant.\vspace{1.2mm}
\par Observe that $p_1(z)$ and $p_2(z)$ both can not be simultaneously zero. Otherwise, from the first equation of \eqref{e3.18}, we obtain $e^{\alpha_1(z+c)-\alpha_2(z+c)}=-1$, which is a contradiction as $\alpha_2(z)-\alpha_1(z)$ is non-constant.\vspace{1.2mm}
\par Similarly, in view of the second equation of \eqref{e3.18}, we conclude that $p_3(z)$ and $p_4(z)$ can not be simultaneously zero.\vspace{1.2mm}
\par Now, let $p_1(z)\not\equiv0$ and $p_2(z)\equiv0$. Then, first equation of \eqref{e3.18} reduces to \bea\label{e3.23} e^{\alpha_1(z+c)-\alpha_3(z)}+e^{\alpha_2(z+c)-\alpha_3(z)}=-ip_1(z).\eea
\par As $\alpha_2(z)-\alpha_1(z)$ is a non-constant polynomial in $\mathbb{C}^2$, it follows from \eqref{e3.23} that $\alpha_1(z+c)-\alpha_3(z)$ and $\alpha_2(z+c)-\alpha_3(z)$ both are non-constants.\vspace{1.2mm}
\par Now, rewrite \eqref{e3.23} as \bea\label{e3.24} e^{\alpha_1(z+c)}+e^{\alpha_2(z+c)}+ip_1(z)e^{\alpha_3(z)}\equiv0.\eea
\par Therefore, in view of Lemma \ref{lem3.7}, we can easily get a contradiction from \eqref{e3.24}.\vspace{1.2mm}
\par In a similar manner we can get a contradiction when  $p_1(z)\equiv0$ and $p_2(z)\not\equiv0$. Hence, $p_1(z)\not\equiv0$ and $p_2(z)\not\equiv0$. By similar arguments, we also get $p_3(z)\equiv0$ and $p_4(z)\not\equiv0$.\vspace{1.2mm}
\par Therefore, by \eqref{e3.18} and Lemma \ref{lem3.1a}, we obtain that \beas \text{either}\;\; ie^{\alpha_1(z+c)-\alpha_3(z)}=p_1(z)\;\; \text{or}\;\;ie^{\alpha_2(z+c)-\alpha_3(z)}=p_1(z)\eeas and \beas \text{either}\;\; ie^{\alpha_3(z+c)-\alpha_1(z)}=p_3(z)\;\; \text{or}\;\;ie^{\alpha_4(z+c)-\alpha_1(z)}=p_3(z).\eeas
\par Now, we consider the four possible subcases.\vspace{1.2mm}
\par \textbf{Subcase 2.1.} Let \bea\label{e3.25}
ie^{\alpha_1(z+c)-\alpha_3(z)}=p_1(z)\;\;\text{and}\;\;ie^{\alpha_3(z+c)-\alpha_1(z)}=p_3(z).\eea
Then, by \eqref{e3.18} and \eqref{e3.25}, we get 
\bea\label{e3.26}-ie^{\alpha_2(z+c)-\alpha_4(z)}=p_2(z)\;\;\text{and}\;\;-ie^{\alpha_4(z+c)-\alpha_2(z)}=p_4(z).\eea
\par Since all $\alpha_j$'s are polynomials in $\mathbb{C}^2$, $j=1,\ldots,4$, from \eqref{e3.25} and \eqref{e3.26}, we conclude that $p_1,p_2,p_3,p_4$, $\alpha_1(z+c)-\alpha_3(z),\alpha_3(z+c)-\alpha_1(z),\alpha_2(z+c)-\alpha_4(z)=k_3$ and $\alpha_4(z+c)-\alpha_2(z)=k_4$ are all constants in $\mathbb{C}$. Let  $\alpha_1(z+c)-\alpha_3(z)=k_1$, $\alpha_3(z+c)-\alpha_1(z)=k_2$, $\alpha_2(z+c)-\alpha_4(z)=k_3$ and $\alpha_4(z+c)-\alpha_2(z)=k_4$, where $k_j\in\mathbb{C}$, $j=1,\ldots,4$. These imply $\alpha_1(z+2c)-\alpha_1(z)=\alpha_3(z+2c)-\alpha_3(z)=k_1+k_2$ and $\alpha_2(z+2c)-\alpha_2(z)=\alpha_4(z+2c)-\alpha_4(z)=k_3+k_4$. Thus, \beas \alpha_1(z)=L_1(z)+H_1(s)+B_1,\; \alpha_3(z)=L_1(z)+H_1(s)+B_2,\eeas \beas\alpha_2(z)=L_2(z)+H_2(s)+B_3,\; \alpha_4(z)=L_2(z)+H_2(s)+B_4,\eeas where $L_i(z)=a_{i1}z_1+a_{i2}z_2$, $H_{j}(s)$ is a polynomial in $s:=d_1z_1+d_2z_2$ with $d_1c_1+d_2c_2=0$, $i=1,2$, $j=1,2$, $a_{i1},a_{i2},B_1,B_2,B_3,B_4\in\mathbb{C}$. As all $p_j$'s are constants, it follows that $s:=d_2z_2$ with $d_2c_2=0$.\vspace{1.2mm}
\par Since $\alpha_2(z)-\alpha_1(z)$ and $\alpha_4(z)-\alpha_3(z)$ are non-constants, we conclude that $L_1(z)+H_1(s)\neq L_2(z)+H_2(s)$. Thus, in view of \eqref{e3.3}, we have \beas g_1(z)=L_1(z)+L_2(z)+H_1(s)+H_2(s)+B_1+B_3\;\;\text{and}\eeas \beas g_2(z)=L_1(z)+L_2(z)+H_1(s)+H_2(s)+B_2+B_4.\eeas
\par Therefore, in view of \eqref{e3.25} and \eqref{e3.26}, we obtain \bea\label{e3.27}\begin{cases} ie^{L_1(c)+B_1-B_2}=a_{11}^k,\;\;ie^{L_1(c)+B_2-B_1}=a_{11}^k,\\ie^{L_2(c)+B_3-B_4}=-a_{21}^k,\;\;ie^{L_2(c)+B_4-B_3}=-a_{21}^k.\end{cases}\eea
\par From first two equations of \eqref{e3.27}, we obtain $e^{2L_1(c)}=-a_{11}^{2k}\;\;\text{and}\;\;e^{2(B_1-B_2)}=1$ and from last two equations of \eqref{e3.27}, we get $e^{2L_2(c)}=-a_{21}^{2k}\;\;\text{and}\;\;e^{2(B_3-B_4)}=1$. These imply $e^{L_1(c)}=\pm ia_{11}^{k}$, $e^{L_2(c)}=\pm ia_{21}^{k}$, $e^{(B_1-B_2)}=\pm 1$ and $e^{(B_3-B_4)}=\pm 1$.\vspace{1.2mm}
\par If $e^{L_1(c)}=ia_{11}^k$, then from \eqref{e3.27}, we get $e^{B_1-B_2}=-1$ and if $e^{L_1(c)}=-ia_{11}^k$, then from \eqref{e3.27}, we obtain $e^{B_1-B_2}=1$. Similarly, if $e^{L_2(c)}=ia_{21}^k$, then from \eqref{e3.27}, we get $e^{B_3-B_4}=1$ and if $e^{L_1(c)}=-ia_{11}^k$, then from \eqref{e3.27}, we obtain $e^{B_3-B_4}=-1$.\vspace{1.2mm}
\par Thus, from fourth and second equations of \eqref{e3.17}, we obtain \beas f_1(z)=\frac{1}{2i}\left[e^{L_1(z)+H_1(s)-L_1(c)+B_2}-e^{L_2(z)+H_2(s)-L_2(c)+B_4}\right]\eeas and \beas f_2(z)=\frac{1}{2i}\left[e^{L_1(z)+H_1(s)-L_1(c)+B_1}-e^{L_2(z)+H_2(s)-L_2(c)+B_3}\right].\eeas
\par \textbf{Subcase 2.2.} Let \bea\label{e3.29}ie^{\alpha_1(z+c)-\alpha_3(z)}=p_1(z)\;\;\text{and}\;\;ie^{\alpha_4(z+c)-\alpha_1(z)}=p_3(z).\eea
\par Then, in view of \eqref{e3.18} and \eqref{e3.29}, we obtain \bea\label{e3.30}-ie^{\alpha_2(z+c)-\alpha_4(z)}=p_2(z)\;\;\text{and}\;\;-ie^{\alpha_3(z+c)-\alpha_2(z)}=p_4(z).\eea
\par Since $\alpha_1(z),\alpha_2(z),\alpha_3(z), \alpha_4(z)$ are polynomials in $\mathbb{C}^2$, we conclude from \eqref{e3.29} and \eqref{e3.30} that $\alpha_1(z+c)-\alpha_3(z)=m_1$, $\alpha_4(z+c)-\alpha_1(z)=m_2$, $\alpha_2(z+c)-\alpha_4(z)=m_3$ and $\alpha_3(z+c)-\alpha_2(z)=m_4$, where $m_j\in\mathbb{C}$, $j=1,\ldots,4$. These imply $\alpha_1(z+2c)-\alpha_1(z)=\alpha_2(z+4c)-\alpha_2(z)=\sum_{j=1}^{4}m_j$. Therefore, $\alpha_1(z)=L(z)+H(s)+B_1$ and $\alpha_2(z)=L(z)+H(s)+B_2$, where $L(z)=a_1z_1+a_2z_2$ and $H(s)$ is a polynomial in $s:=d_1z_1+d_2z_2$ with $d_1c_1+d_2c_2=0$, $a_1,a_2,B_1,B_2\in\mathbb{C}$. Hence $\alpha_2(z)-\alpha_1(z)=B_2-B_1\in\mathbb{C}$, a contradiction.\vspace{1.2mm}
\par \textbf{Subcase 2.3.} Let \beas
ie^{\alpha_2(z+c)-\alpha_3(z)}=p_1(z)\;\;\text{and}\;\;ie^{\alpha_3(z+c)-\alpha_1(z)}=p_3(z).\eeas
\par Then, by similar arguments as in Subcase 2.2, we easily get a contradiction.\vspace{1.2mm}
\par \textbf{Subcase 2.4.} Let \bea\label{e3.31}	ie^{\alpha_2(z+c)-\alpha_3(z)}=p_1(z)\;\;\text{and}\;\;ie^{\alpha_4(z+c)-\alpha_1(z)}=p_3(z).\eea
\par Then, by \eqref{e3.18} and \eqref{e3.25}, we get \bea\label{e3.32}-ie^{\alpha_1(z+c)-\alpha_4(z)}=p_2(z)\;\;\text{and}\;\;-ie^{\alpha_3(z+c)-\alpha_2(z)}=p_4(z).\eea
\par As $\alpha_1(z),\alpha_2(z),\alpha_3(z), \alpha_4(z)$ are polynomials in $\mathbb{C}^2$, from \eqref{e3.31} and \eqref{e3.32}, we conclude that $p_1,p_2,p_3,p_4$, $\alpha_2(z+c)-\alpha_3(z)$, $\alpha_4(z+c)-\alpha_1(z)$, $\alpha_1(z+c)-\alpha_4(z)$ and $\alpha_3(z+c)-\alpha_2(z)$ are all constants in $\mathbb{C}$.\vspace{1.2mm}
\par Let $\alpha_2(z+c)-\alpha_3(z)=n_1$, $\alpha_4(z+c)-\alpha_1(z)=n_2$, $\alpha_1(z+c)-\alpha_4(z)=n_3$ and $\alpha_3(z+c)-\alpha_2(z)=n_4$, where $n_j\in\mathbb{C}$, $j=1,\ldots,4$. Thus, we get $\alpha_1(z+2c)-\alpha_1(z)=\alpha_4(z+2c)-\alpha_4(z)=n_2+n_3$ and $\alpha_2(z+2c)-\alpha_2(z)=\alpha_3(z+2c)-\alpha_3(z)=n_1+n_4$. Hence, we assume \beas \alpha_1(z)=L_1(z)+H_1(s)+B_1,\;\; \alpha_4(z)=L_1(z)+H_1(s)+B_2,\eeas \beas \alpha_2(z)=L_2(z)+H_2(s)+B_3\;\;\text{and}\;\; \alpha_3(z)=L_2(z)+H_2(s)+B_4,\eeas where $L_i(z)=a_{i1}z_1+a_{i2}z_2$, $H_{j}(s)$ is a polynomial in $s:=d_1z_1+d_2z_2$ with $d_1c_1+d_2c_2=0$ for $i=1,2$, $j=1,2$, $a_{i1},a_{i2},B_1,B_2,B_3,B_4\in\mathbb{C}$. Since $\alpha_2(z)-\alpha_1(z)$ and $\alpha_4(z)-\alpha_3(z)$ are non-constants, it follows that $L_1(z)+H_1(s)\neq L_2(z)+H_2(s)$. Also since $p_1,p_2,p_3,p_4$ are all constants, it follows that $\frac{\partial \alpha_j}{\partial z_1}$ are constants for all j=1,2,3,4. Hence, $H_1$ and $H_2$ are the polynomials in $s:=d_2z_2$ with $d_2c_2=0$.\vspace{1.2mm}
\par Now, in view of \eqref{e3.3}, we get \beas\begin{cases}
g_1(z)=L_1(z)+L_2(z)+H_1(s)+H_2(s)+B_1+B_3,\\ g_2(z)=L_1(z)+L_2(z)+H_1(s)+H_2(s)+B_2+B_4.
\end{cases}\eeas
\par Therefore, in view of \eqref{e3.31} and \eqref{e3.32}, we obtain \bea\label{e3.33}\begin{cases} ie^{L_2(c)+B_3-B_4}=a_{21}^{k},\;\;ie^{L_1(c)+B_2-B_1}=a_{11}^k,\\ie^{L_1(c)+B_1-B_2}=-a_{11}^k,\;\;ie^{L_2(c)+B_4-B_3}=-a_{21}^k.\end{cases}\eea
\par Thus, from \eqref{e3.33}, we obtain \beas e^{2L_1(c)}=a_{11}^{2k},\;\;e^{2L_2(c)}=a_{21}^{2k},\;\;e^{2(B_1-B_2)}=-1,\;\;\text{and}\;\;e^{2(B_3-B_4)}=-1,\eeas which imply \beas e^{L_1(c)}=\pm a_{11}^{k},\;\;e^{L_2(c)}=\pm a_{21}^{k},\;\;e^{B_1-B_2}=\pm i,\;\;\text{and}\;\;e^{B_3-B_4}=\pm i.\eeas
\par If $e^{L_1(c)}=a_{11}^k$, then from \eqref{e3.33}, we have $e^{B_1-B_2}=i$ and if $e^{L_1(c)}=-a_{11}^k$, then we have $e^{B_1-B_2}=-i$. Similarly, if $e^{L_2(c)}=a_{21}^k$, the from \eqref{e3.33}, we have $e^{B_3-B_4}=-i$ and if $e^{L_2(c)}=-a_{21}^k$, the from \eqref{e3.33}, we have $e^{B_3-B_4}=i$.\vspace{1.2mm}
\par Hence, from fourth and second equations of \eqref{e3.17}, we get
\beas\begin{cases}
f_1(z)=-\frac{1}{2i}\left[e^{L_1(z)+H_1(z)-L_1(c)+B_2}-e^{L_2(z)+H_2(z)-L_2(c)+B_4}\right]\\f_2(z)\;=\frac{1}{2i}\left[e^{L_1(z)+H_1(z)-L_1(c)+B_1}-e^{L_2(z)+H_2(z)-L_2(c)+B_3}\right].\end{cases}\eeas 
\par \textbf{Case 3.} Let $\alpha_4-\alpha_3=\eta\in\mathbb{C}$ and $\alpha_2-\alpha_1$ be non-constant. Then, \eqref{e3.3} yields that $p_2(z)$ is a constant in $\mathbb{C}$.\vspace{1.2mm}
\par Then first equation of \eqref{e3.18} yields that \bea\label{e3.32a} ie^{\alpha_1(z+c)-\alpha_3(z)}+ie^{\alpha_2(z+c)-\alpha_3(z)}=p_1(z)-e^{\eta}p_2(z).\eea
\par From \eqref{e3.32a}, we observe that $T\left(r,e^{\alpha_1(z+c)-\alpha_3(z)}\right)=T\left(r,e^{\alpha_2(z+c)-\alpha_3(z)}\right)+O(1)$.\vspace{1.2mm}
\par We also note that \beas N\left(r,e^{\alpha_1(z+c)-\alpha_3(z)}\right)=N\left(r,\frac{1}{e^{\alpha_1(z+c)-\alpha_3(z)}}\right)=S\left(r,e^{\alpha_1(z+c)-\alpha_3(z)}\right)\eeas and \beas N\left(r,\frac{1}{e^{\alpha_1(z+c)-\alpha_3(z)}-w}\right)=N\left(r,\frac{1}{e^{\alpha_2(z+c)-\alpha_3(z)}}\right)=S\left(r,e^{\alpha_2(z+c)-\alpha_3(z)}\right),\eeas where $w=-i(p_1-e^{eta}p_2)$.\vspace{1.2mm}
Now, by the second fundamental theorem of Nevanlinna for several complex variables, we obtain \beas && T\left(r,e^{\alpha_1(z+c)-\alpha_3(z)}\right)\\&&\leq \ol N\left(r,e^{\alpha_1(z+c)-\alpha_3(z)}\right)+\ol N\left(r,\frac{1}{e^{\alpha_1(z+c)-\alpha_3(z)}}\right)+\ol N\left(r,\frac{1}{e^{\alpha_1(z+c)-\alpha_3(z)}-w}\right)\\&&+S\left(r,e^{\alpha_1(z+c)-\alpha_3(z)}\right)\\&&\leq S\left(r,e^{\alpha_1(z+c)-\alpha_3(z)}\right)+S\left(r,e^{\alpha_2(z+c)-\alpha_3(z)}\right).\eeas
\par This implies that $\alpha_1(z+c)-\alpha_3(z)$ is a constant in $\mathbb{C}$. Therefore, $\alpha_2(z+c)-\alpha_3(z)$ is also constant. Thus, $\alpha_2(z)-\alpha_1(z)$ becomes constant, which is a contradiction.\vspace{1.2mm}
\par \textbf{Case 4.} Let $\alpha_2-\alpha_1=\eta_1\in\mathbb{C}$ and $\alpha_4-\alpha_3$ be non-constant. Then, by similar argument as used in \textbf{Case 3}, we easily get a contradiction.\end{proof}
\begin{proof}[\textbf{Proof of Theorem $\ref{t3}$}]
Let $(f_1(z),f_2(z))$ be a pair of finite order transcendental entire solution of \eqref{e2.4}.\vspace{1.2mm}
\par Then, by similar argument as in Theorem \ref{t1}, we obtain \bea\label{e2.33} \begin{cases}f_1^{(k)}(z)=\displaystyle\frac{1}{2}\left[e^{p_1(z)}+e^{-p_1(z)}\right],\\ f_2(z+c)-f_2(z)=\displaystyle\frac{1}{2i}\left[e^{p_1(z)}-e^{-p_1(z)}\right],\\f_2^{(k)}(z)=\displaystyle\frac{1}{2}\left[e^{p_2(z)}+e^{-p_2(z)}\right],\\f_1(z+c)-f_1(z)=\displaystyle\frac{1}{2i}\left[e^{p_2(z)}-e^{-p_2(z)}\right],\end{cases}\eea where $p_1(z)$ and $p_2(z)$ are two non-constant polynomials.\vspace{1.2mm}
\par Differentiating fourth equation of \eqref{e2.33} $k$ times, we get \bea\label{e2.35a} f_1^{(k)}(z+c)-f_1^{(k)}(z)=\frac{1}{2i}\left[G_1e^{p_2(z)}-G_2e^{-p_2(z)}\right],\eea where $G_1=\left(p_2^{\prime}\right)^k+M_k\left(p_2^{(k)},\ldots,p_2^{\prime}\right)$ and $G_2=\left(-p_2^{\prime}\right)^k+N_k\left(p_2^{(k)},\ldots,p_2^{\prime}\right)$, where $M_k$ is a differential polynomial in $p_2(z)$ of degree less than $k$ in which $p_2^{\prime}$ appears in the product with at least one higher order derivative of $p_2$. Similar definition is for $N_k$.\vspace{1.2mm}
\par Now in view of the first equation of \eqref{e2.33} and \eqref{e2.35a}, we obtain \bea&&\label{e2.34}	-ie^{p_1(z+c)+p_2(z)}-ie^{-p_1(z+c)+p_2(z)}+ie^{p_1(z)+p_2(z)}+ie^{-p_1(z)+p_2(z)}\nonumber\\&&+G_1e^{2p_2(z)}=G_2.\eea
\par Similarly in view of the second and third equations of \eqref{e2.33}, we obtain \bea&&\label{e2.35}-ie^{p_2(z+c)+p_1(z)}-ie^{-p_2(z+c)+p_1(z)}+ie^{p_2(z)+p_1(z)}+ie^{-p_2(z)+p_1(z)}\nonumber\\&&+G_3e^{2p_1(z)}=G_4,\eea where $G_3=\left(p_1^{\prime}\right)^k+O_k\left(p_1^{(k)},\ldots,p_1^{\prime}\right)$ and $G_4=\left(-p_1^{\prime}\right)^k+R_k\left(p_1^{(k)},\ldots,p_1^{\prime}\right)$, where $O_k$ is a differential polynomial in $p_1(z)$ of degree less than $k$ in which $p_1^{\prime}$ appears in the product with at least one more derivative of $p_1$ of order greater than or equal to $2$. Similar definition is for $R_k$.\vspace{1.2mm}
\par Now, we consider the following two possible cases.\vspace{1.2mm}
\par \textbf{Case 1.} Let $p_2(z)-p_1(z)=\eta$, $\eta\in\mathbb{C}$. Then \eqref{e2.34} and \eqref{e2.35} yield \bea\label{e2.36}\begin{cases}-ie^{\eta}e^{p_1(z+c)+p_1(z)}-ie^{\eta}e^{p_1(z)-p_1(z+c)}+e^{\eta}(i+G_1e^{\eta})e^{2p_1(z)}=G_2-ie^{\eta},\\  -ie^{\eta}e^{p_1(z+c)+p_1(z)}-ie^{-\eta}e^{p_1(z)-p_1(z+c)}+(ie^{\eta}+G_3)e^{2p_1(z)}=G_4-ie^{-\eta}.\end{cases}\eea
\par Note that $i+G_1e^{\eta}$ and $G_2-ie^{\eta}$ can not be zero simultaneously. Otherwise, from the first equation of \eqref{e2.36}, we get $p_1(z+c)$, and hence $p_1(z)$ is constant, a contradiction.\vspace{1.2mm}
\par Next, suppose that $i+G_1e^{\eta}\not\equiv0$ and $G_2-ie^{\eta}\equiv0$. Then the first equation of \eqref{e2.36} reduces to \bea\label{e2.37}ie^{p_1(z+c)-p_1(z)}+ie^{-(p_1(z)+p_1(z+c))}=i+G_1e^{\eta}.\eea
\par Now, observe that \beas N\left(r,e^{-(p_1(z+c)+p_1(z))}\right)=N\left(r,\frac{1}{e^{-(p_1(z+c)+p_1(z))}}\right)=S\left(r,e^{-(p_1(z+c)+p_1(z))}\right)\eeas and in view of \eqref{e2.37}, we have \beas N\left(r,\frac{1}{e^{-(p_1(z+c)+p_1(z))}-w}\right)=N\left(r,\frac{1}{e^{p_1(z+c)-p_1(z)}}\right)=S\left(r,e^{-(p_1(z+c)+p_1(z))}\right),\eeas where $w=1-iG_1e^{\eta}$.\vspace{1.2mm}
\par By the second fundamental theorem of Nevanlinna, we obtain \beas&& T\left(r,e^{-(p_1(z+c)+p_1(z))}\right)\\&&\leq \ol N\left(r,e^{-(p_1(z+c)+p_1(z))}\right)+\ol N\left(r,\frac{1}{e^{-(p_1(z+c)+p_1(z))}}\right)\\&&+ \ol N\left(r,\frac{1}{e^{-(p_1(z+c)+p_1(z))}-w}\right)+S\left(r,e^{-(p_1(z+c)+p_1(z))}\right)\\&&\leq S\left(r,e^{-(p_1(z+c)+p_1(z))}\right)+S\left(r,e^{p_1(z+c)-p_1(z)}\right).\eeas
\par This implies that $p_1(z+c)+p_1(z)$ is constant, and hence $p_1(z)$ is constant, which is a contradiction.\vspace{1.2mm}
\par Similarly, we can get a contradiction for the case $i+G_1e^{\eta}\equiv0$ and $G_2-ie^{\eta}\not\equiv0$. Hence, $i+G_1e^{\eta}\not\equiv0$ and $G_2-ie^{\eta}\not\equiv0$.\vspace{1.2mm}
\par In a similar manner, we can show that $ie^{\eta}+G_3\not\equiv0$ and $G_4-ie^{-\eta}\not\equiv0$.\vspace{1.2mm}
\par Therefore, in view of Lemma \ref{lem3.1} and \eqref{e2.36}, we obtain \bea\label{e2.38}\begin{cases}-ie^{\eta}e^{p_1(z)-p_1(z+c)}=G_2-ie^{\eta},\\-ie^{-\eta}e^{p_1(z)-p_1(z+c)}=G_4-ie^{-\eta}.\end{cases}\eea 
\par By \eqref{e2.36} and \eqref{e2.38}, we get \bea\label{e2.39} \begin{cases} ie^{p_1(z+c)-p_1(z)}=i+G_1e^{\eta},\\ ie^{\eta}e^{p_1(z+c)-p_1(z)}=G_3+ie^{\eta}.\end{cases}\eea
\par Since $p_1(z)$ is a non-constant polynomial in $\mathbb{C}^2$, it follows from \eqref{e2.39} that $p_1(z+c)-p_1(z)$, $G_1,G_2,G_3,G_4$ are all constants in $\mathbb{C}$. Therefore, we must have $p_1(z)=L(z)+H(z_2)+\beta$, where $L(z)=\alpha_1z_1+\alpha_2z_2$, $H(z_2)$ is a polynomial in $z_2$, only, $\alpha_1,\alpha_2,\beta\in\mathbb{C}$. Thus, $p_2(z)=L(z)+H(z_2)+\beta+\eta$.\vspace{1.2mm}
\par Therefore, in view of \eqref{e2.38}, \eqref{e2.39}, we obtain \bea\label{e2.43} \begin{cases}-ie^{\eta}e^{-L(c)}=(-\alpha_1)^k-ie^{\eta},\;\; -ie^{\eta}e^{-L(c)}=(-\alpha_1)^k-ie^{-\eta},\\ ie^{L(c)}=i+\alpha_1^k e^{\eta},\;\; ie^{\eta}e^{L(c)}=\alpha_1^k+ie^{\eta}.\end{cases}\eea
\par Now from the first and second equations of \eqref{e2.43}, we easily obtain $e^{2\eta}=1$. From the first and third equations, we get \bea\label{e2.44} (-1)^ke^{\eta}\alpha_1^{2k}+i\left[(-1)^k-1\right]\alpha_1^{k}=0.\eea
	\par If $k$ is even, the in view of \eqref{e2.44} and the fact that $\alpha_1\neq0$, we easily obtain a contradiction. Hence, $k$ must be odd and therefore, from \eqref{e2.44}, we get \beas \alpha_1^k=-2ie^{-\eta}.\eeas 
	\par Thus from the first equation of \eqref{e2.43}, we obtain \beas e^{L(c)}=\frac{ie^{\eta}}{\alpha_1^k+ie^{\eta}}.\eeas
	\par If $e^{\eta}=1$, then $\alpha_1^k=-2i$, and if $e^{\eta}=-1$, then $\alpha_1^k=2i$.\vspace{1.2mm}
	\par Thus, from \eqref{e2.33}, we obtain \beas f_1(z)=\frac{e^{L(z)+H(z_2)+\beta}-e^{-(L(z)+H(z_2)+\beta)}}{2\alpha_1^k},\eeas\beas f_2(z)=\frac{e^{L(z)+H(z_2)+\beta+\eta}-e^{-(L(z)+H(z_2)+\beta+\eta)}}{2\alpha_1^k}.\eeas 
\par \textbf{Case 2.} Let $p_2(z)-p_1(z)$ be non-constant.\vspace{1.2mm}
\par Now we consider two possible subcases.\vspace{1.2mm}
\par \textbf{Subcase 2.1.} Let $p_2(z)+p_1(z)=\eta$, where $\eta$ is a constant in $\mathbb{C}$.\vspace{1.2mm}
\par Then, \eqref{e2.34} and \eqref{e2.35} yield  \bea\label{e2.40}\begin{cases} e^{p_1(z+c)-p_1(z)}+e^{-(p_1(z+c)+p_1(z))}+(ie^{\eta}G_1-1)e^{-2p_1(z)}=1+ie^{-\eta}G_2,\\ -ie^{\eta}e^{p_1(z)-p_1(z+c)}-ie^{-\eta}e^{p_1(z+c)+p_1(z)}+(G_3+ie^{-\eta})e^{2p_1(z)}=G_4-ie^{\eta}.\end{cases}\eea
\par Now, by similar argument as in Case 1, we can prove that $ie^{\eta}G_1-1$, $1+ie^{-\eta}G_2$, $G_3+ie^{-\eta}$ and $G_4-ie^{\eta}$ are all non-zero.\vspace{1.2mm}
\par Therefore, by Lemma \ref{lem3.1} and \eqref{e2.40}, we have \bea\label{e2.41} 	e^{p_1(z+c)-p_1(z)}=1+ie^{-\eta}G_2\;\;\text{and}\;\;-ie^{\eta}e^{p_1(z)-p_1(z+c)}=G_4-ie^{\eta}.\eea
\par From \eqref{e2.40} and \eqref{e2.41}, we obtain \bea\label{e2.47}e^{-p_1(z+c)+p_1(z)}=1-ie^{\eta}G_1\;\;\text{and}\;\;e^{p_1(z+c)-p_1(z)}=1-ie^{\eta}G_3.\eea
\par Since $p_1(z)$ is a non constant polynomial, it follows from \eqref{e2.41} and \eqref{e2.47} that $G_1$, $G_2$, $G_3$, $G_4$ and $p_1(z+c)-p_1(z)$ are all constants. As $p_1(z+c)-p_1(z)$ is constant, we must have $p_1(z)=L(z)+H(z_2)+\beta$, where $L(z)=\alpha_1z_1+\alpha_2z_2$, $H(z_2)$ is a polynomial in $z_2$, only, $\alpha_1,\alpha_2, \beta\in\mathbb{C}$. Thus, $p_2(z)=-(L(z)+H(z_2)+\beta)+\eta$. Since $G_1$, $G_2$, $G_3$, $G_4$ are all constants, in view of the definitions of them, we see that $G_2=G_3=\alpha_1^k$ and $G_1=G_4=(-\alpha_1)^k$\vspace{1.2mm}
\par Therefore, \eqref{e2.41} and \eqref{e2.47} yield that \bea\label{e2.48}\begin{cases}e^{L(c)}=1+ie^{-\eta}\alpha_1^k,\;\;-ie^{\eta}e^{-L(c)}=(-\alpha_1)^k-ie^{\eta},\\e^{-L(c)}=1-ie^{\eta}(-\alpha_1)^k,\;\;e^{L(c)}=1-ie^{\eta}\alpha_1^k.\end{cases}\eea
\par From the above two equations of \eqref{e2.48}, we easily obtain \bea\label{e2.49} (-1)^kie^{-\eta}\alpha_1^k+1+(-1)^k=0.\eea
\par If $k$ is odd, then in view of \eqref{e2.49}, we can get a contradiction. If $k$ is even, then from \eqref{e2.49}, we get \beas \alpha_1^k=2ie^{\eta}.\eeas
\par From first and fourth equations of \eqref{e2.48}, we obtain \beas e^{2\eta}=-1.\eeas
\par Therefore, from \eqref{e2.33}, we have \beas f_1(z_1,z_2)=\frac{e^{L(z)+H(z_2)+\beta}+e^{-(L(z)+H(z_2)+\beta)}}{2\alpha_1^k},\eeas\beas f_2(z_1,z_2)=\frac{e^{-(L(z)+H(z_2)+\beta)+\eta}+e^{L(z)+H(z_2)+\beta-\eta}}{2\alpha_1^k}.\eeas
	\par \textbf{Subcase 2.2.} Let $p_2(z)+p_1(z)$ be a non-constant polynomial.\vspace{1.2mm}
	\par Now we consider four possible cases below.
	\par \textbf{Subcase 2.2.1.} Let $p_1(z+c)+p_2(z)=\zeta_1$ and $p_2(z+c)+p_1(z)=\zeta_2$, where $\zeta_1,\zeta_2\in\mathbb{C}$. Then, we easily see that $p_1(z+2c)-p_1(z)=\zeta_1-\zeta_2$ and $p_2(z+2c)-p_2(z)=\zeta_2-\zeta_1$. This implies that $p_1(z)=\alpha z+\beta_1$ and $p_2(z)=-\alpha z+\beta_2$, where $\alpha,\beta_1,\beta_2\in\mathbb{C}$. But, then $p_1(z)+p_2(z)=\beta_1+\beta_2=$ constant, a contradiction.\vspace{1.2mm}
	\par \textbf{Subcase 2.2.2.} Let $p_1(z+c)+p_2(z)=\zeta_1\in\mathbb{C}$ and $p_2(z+c)+p_1(z)$ is non-constant.\vspace{1.2mm}
\par Then, by Lemma \ref{lem3.1} and \eqref{e2.35}, we get $-ie^{-p_2(z+c)+p_1(z)}=G_4$. This implies that $-p_2(z+c)+p_1(z)=\zeta_2\in\mathbb{C}$. Therefore, we must have $p_1(z+2c)+p_1(z)=\zeta_1+\zeta_2$, and hence $p_1(z)$ must be a constant, a contradiction.\vspace{1.2mm}
\par \textbf{Subcase 2.2.3.} Let $p_2(z+c)+p_1(z)=\zeta_1\in\mathbb{C}$ and $p_1(z+c)+p_2(z)$ is non-constant.\vspace{1.2mm}
\par Then, by similar argument as in subcase 2.2.2, we obtain a contradiction.\vspace{1.2mm}
\par \textbf{Subcase 2.2.4.} Let $p_2(z+c)+p_1(z)$ and $p_1(z+c)+p_2(z)$ both are non-constant.\vspace{1.2mm}
\par Then, using Lemma \ref{lem3.1}, we obtain from \eqref{e2.34} and \eqref{e2.35} that \beas -ie^{-p_1(z+c)+p_2(z)}=G_2\;\; \text{and}\;\; -ie^{-p_2(z+c)+p_1(z)}=G_4.\eeas
\par As $p_1(z)$ and $p_2(z)$ are non-constant polynomials, we must have $-p_1(z+c)+p_2(z)=\xi_1$ and $-p_2(z+c)+p_1(z)=\xi_2$, where $\xi_1,\xi_2\in\mathbb{C}$. This implies that $-p_1(z+c)+p_1(z)=-p_2(z+c)+p_2(z)=\xi_1+\xi_2$. Therefore, we must get $p_1(z)=\alpha z+\beta_1$ and $p_2(z)=\alpha z+\beta_2$, where $\alpha,\beta_1,\beta_2\in\mathbb{C}$. But, then we get $p_2(z)-p_1(z)=\beta_2-\beta_1=$constant, which is a contradiction.\end{proof}
\section{An open problem} For further investigation, we pose an open problem as follows. What could be the entire solutions of the system of Fermat type differential-difference equation \beas \begin{cases}	\left(\frac{\partial^k f_1}{\partial z_1^k}\right)^2+[f_2(z+c)-f_2(z)]^2=e^{g_1(z_1,z_2)},\\\left(\frac{\partial^k f_2}{\partial z_1^k}\right)^2+[f_1(z+c)-f_1(z)]^2=e^{g_2(z_1,z_2)},\end{cases}\eeas where $g_1(z_1,z_2)$ and $g_2(z_1,z_2)$ are any two polynomials in $\mathbb{C}^2$?

\section{Statements and Declarations}
\vspace{1.3mm}

\noindent \textbf {Conflict of interest} The authors declare that there are no conflicts of interest regarding the publication of this paper.
\vspace{1.5mm}

\noindent{\bf Funding} There is no funding received from any organizations for this research work.
\vspace{1.5mm}

\noindent \textbf {Data availability statement}  Data sharing is not applicable to this article as no database were generated or analyzed during the current study.

\noindent{\bf Acknowledgment:} The authors would like to thank the referee(s) for the helpful suggestions and comments to improve the exposition of the paper.

\end{document}